\def\Var{\text{Var}}
\def\bmin{\beta_{\mathrm{min}}}
\def\bmax{\beta_{\mathrm{max}}}
\def\vmin{v_{\mathrm{min}}}
\def\vmax{v_{\mathrm{max}}}
\theoremstyle{plain}
\newtheorem{thm}{Theorem}[section]
\newtheorem{prop}[thm]{Proposition}
\newtheorem{prob}[thm]{Problem}
\title{Best and worst policy control in low-prevalence SEIR}
\date{  }
\author{
\begin{tabular}{c} Scott Sheffield\\[-5pt]\small MIT \end{tabular}
}
\begin{document}

\maketitle

\begin{abstract}
We consider the low-prevalence linearized SEIR epidemic model for a society that has resolved to keep future infections low in anticipation of a vaccine. The society can vary its amount of potentially-infection-spreading activity over time, within a certain feasible range. Because the activity has social or economic value, the society aims to maximize activity overall subject to infection rate constraints.

We find that consistent policies are the {\em worst possible} in terms of activity, while the best policies alternate between high and low activity. In a variant involving multiple subpopulations, we find that the best policies are maximally {\em coordinated} (maintaining similar prevalence among subpopulations) but {\em oscillatory} (having growth rates that vary in time).

It turns out that linearized SEIR is mathematically equivalent to an idealized racecar model (with different subpopulations corresponding to different cars) and the amount of fuel used corresponds to the amount of activity. Using this analogy, steady V-shaped formations (in which one subpopulation ``leads the way'' with consistently higher prevalence and activity, while others follow behind with lower prevalence and activity) are especially problematic. These formations are {\em very effective} at minimizing fuel use, hence {\em very ineffective} at boosting activity.  In an appendix, we obtain analogous results for alternative notions of activity, which incorporate crowding effects.
\end{abstract}

\section{Introduction}

Non-pharmaceutical interventions (NPIs), including the prohibition or rescheduling of activities, are used to limit the damage caused by pandemics. When managing a pandemic, a government or population may decide that the cost of obtaining significant herd immunity through infection is unacceptably high, and that it is therefore necessary to maintain low disease prevalence through NPIs until vaccines are available.

One important and controversial question is the following: once disease prevalence is low (say, 50 confirmed cases per million per day), is it better to keep the effective reproductive rate as close to one as possible (aiming for consistency and sustainability) or to ``suppress and resuppress'' (i.e., drive the prevalence down even further, then relax restrictions, then restore restrictions if/when prevalence rates recover, etc.)?

In a recent paper, the author and others showed that strict but intermittent measures were better than consistently moderate measures at optimizing certain utility functions within the low-prevalence limit of SEIR/SEIS and its variants  \cite{lockdownscount2020}. In this follow-up note, we show that consistent strategies are actually the {\em least effective} when measured by a certain type of {\em activity}. We also work out the {\em most effective} strategies, including in settings where upper and lower bounds on the infection rate are imposed. We then explore settings with multiple subpopulations and find that, in terms of prevalence, it is much better to aim for {\em geographic consistency} and {\em temporal variability} than other way around.

The models in this paper are simplifications that omit important considerations. They are meant to generate hypotheses and inform judgment, not resolve real world questions on their own. Nonetheless, it is interesting that, within these models, some of the {\em intuitively-best-sounding} practices are actually the worst.

{\bf Acknowledgement:}
We thank Morris Ang, Minjae Park, Joshua Pfeffer, Pu Yu, and the co-authors of \cite{lockdownscount2020} for useful conversations. The author is partially supported by NSF award DMS 1712862.

\section{Methods}

\subsection{Setup and motivation: a linearized SEIR optimal control problem}
During a pandemic, there may come a time when a state or country resolves to keep its total number of future infections small (perhaps less than $2$ percent of the population) up until a later time $T$ at which a vaccination program will commence.  If we assume dynamics are given by a standard SEIR model, this means that $S$ and $R$ can be treated as (essentially) constant for the remaining duration. We choose our time unit so that the mean incubation time is $1$, and the mean infectious time is the constant $\gamma^{-1}$. We then obtain the {\em linearized} ODE\cite{diekmann2010construction} given by \begin{equation} \label{eqn::note} \dot E(t) = -E(t) + \beta(t) I(t), \,\,\,\,\,\,\,\,  \,\,\,\,\, \dot I(t) = E(t) - \gamma I(t),\end{equation} where $E(t)$ represents the fraction of the population {\em exposed} (infected but not yet infectious), $I(t)$ represents the fraction that is {\em infectious}, and $\beta(t) \in [\bmin,\bmax]$ is a control parameter (a measurable function of $t$) describing the rate at which disease transmission occurs at time $t$.

We assume that there is some {\em flexible activity} (haircuts, conversations, surgeries, lessons, factory shifts, etc.)\ that has social/economic value but also carries transmission risk. By ``flexible'' we mean that its utility is not dependent on {\em when} it occurs. Our policy tool is deciding how much of this activity to schedule/allow and when to do so. For now, we assume that the disease transmission caused by flexible activity is primarily due to the activity itself (not ancillary crowding effects) so that disease transmission is linear in the amount of activity. (We will discuss alternatives in Appendix~\ref{app:generalutility}.) We interpret $\beta(t)$ as the {\em total} amount of transmission-inducing activity (flexible or otherwise) happening at time $t$.  We interpret $\bmin$ as the amount of transmission-inducing activity that occurs when no flexible activity is scheduled, so that $\beta(t) - \bmin$ is the amount of {\em flexible} activity at time $t$. We interpret $\bmax-\bmin$ as the maximal amount of flexible activity that can be scheduled at once (due to limitations on space or on the number of individuals available to be active).   Define the {\em total activity} by $A = \int_0^T \beta(t) dt$. Our goal will be to find strategies that (subject to restrictions) maximize $A$.

The main assumption underlying this goal is that the social/economic utility derived from flexible activity depends only on $A$, not on how the flexible activity is temporally distributed.  We {\em do not} assume that all flexible activity is of equal value. For example, we allow for the possibility that if $A$ were small, only very important activity would be allowed, but if $A$ were larger, more discretionary activity would take place.\footnote{For example, if a maskless conversation contributes twice as much disease transmission risk as a similar masked conversation, then the maskless conversation would count as twice as much ``activity.'' So ``removing a mask during a close conversation'' could be treated as a form of discretionary activity that might only occur in larger $A$ scenarios. If an 8-person party involves more than twice as much transmission as a 4-person party, then it would count as more than twice as much activity.} As long as social/economic utility is an increasing function of $A$ ({\em not necessarily linear}) it is sensible for maximizing $A$ to be an objective.

Alternative objective functions (accounting for activity that is {\em not} perfectly flexible, e.g.\ because people pursuing different activities at once might {\em crowd} each other in a way that increases transmission) will be discussed in Appendix~\ref{app:generalutility}. For now, we will focus only on maximizing $A$, not on minimizing total infections. (Effectively, we are assuming that prevalence is low enough that further minimizing infections is not the primary consideration.) But we will consider imposing upper and lower bounds on infection rates. See also \cite{lockdownscount2020} for further references, as well as some discussion of the probability distributions governing incubation and infectious periods, social networks, and other factors beyond the scope of this note.

We assume a vaccine will arrive at time $T$. We do not model the vaccination strategy, but we allow that the cost of controlling the disease during its implementation may depend on the terminal values $E(T)$ and $I(T)$. Thus it is of interest to find the optimal $\beta:[0,T] \to [\bmin,\bmax]$ yielding any particular choice of $E(T)$ and $I(T)$, and to find the amount of activity that approach yields. (Finding the optimal values for $E(T)$ and $I(T)$ would then be a second step, and would depend on the vaccination rollout model used.)

\subsection{Problem statements}

\begin{prob} \label{prob::activity}
Given $E(0)$, $I(0)$, $E(T)$ and $I(T)$, find the $\beta$ that maximizes $A$.
\end{prob}

To simplify the presentation, we change coordinates to reduce Problem~\ref{prob::activity} to a one-dimensional problem. Define the {\em velocity} of the disease to be $V(t) := E(t)/I(t)$. The term ``velocity'' is motivated by the fact that
\begin{equation}\label{eqn::logI} \frac{\partial}{\partial t} \log I(t) = \frac{\dot I(t)}{I(t)} = V(t) - \gamma, \,\,\,\,\,\,\,\,\,\,  \log\frac{I(T)}{I(0)} = \int_0^T V(t) dt - \gamma T, \end{equation}
so that $V(t)$ is (up to additive constant) the rate at which $\log I(t)$ is changing. Computing further we find

\begin{align} \label{eqn::Y} \dot V(t) &= \frac{ \dot E(t) I(t) - \dot I(t)E(t)}{I(t)^2} \\
&=  \frac{ \bigl(- E(t) + \beta(t)I(t)\bigr) I(t) - \bigl(E(t) - \gamma I(t) \bigr) E(t)}{I(t)^2}  \nonumber \\
&=  \frac{  -E(t)I(t) + \beta(t) I(t)^2 - E(t)^2 + \gamma I(t)E(t)}{I(t)^2}    \nonumber \\
&= \beta(t) -V(t)^2 + (\gamma-1)V(t) \nonumber \\
&= \beta(t) -\phi(V(t)), \nonumber
\end{align}
where $\phi$ is the quadratic function defined by $\phi(x) := x^2 + (1-\gamma)x$. This also implies $\beta(t) = \dot V(t) + \phi(V(t))$. We then have
\begin{equation} \label{eqn::A}
A = \int_0^T \phi\bigl(V(t)\bigr)dt + V(T)-V(0) = \int_0^T  V(t)^2 dt  +  \int_0^T (1-\gamma) V(t) dt + V(T) - V(0).
\end{equation}

The existence and uniqueness of solutions to \eqref{eqn::Y} are immediate from the Carath\'eodory existence theorem and the corresponding uniqueness conditions, provided $\beta$ is a measurable function from $[0,T]$ to $[\bmin,\bmax]$.  See \cite[Theorem 5.3]{hale1980ODE}. If the reader prefers not to consider this much generality, it is fine to focus on the case that $\beta$ is piecewise continuous (or even piecewise constant).

\begin{figure}[ht!]
\begin{center}
\includegraphics[scale=0.4]{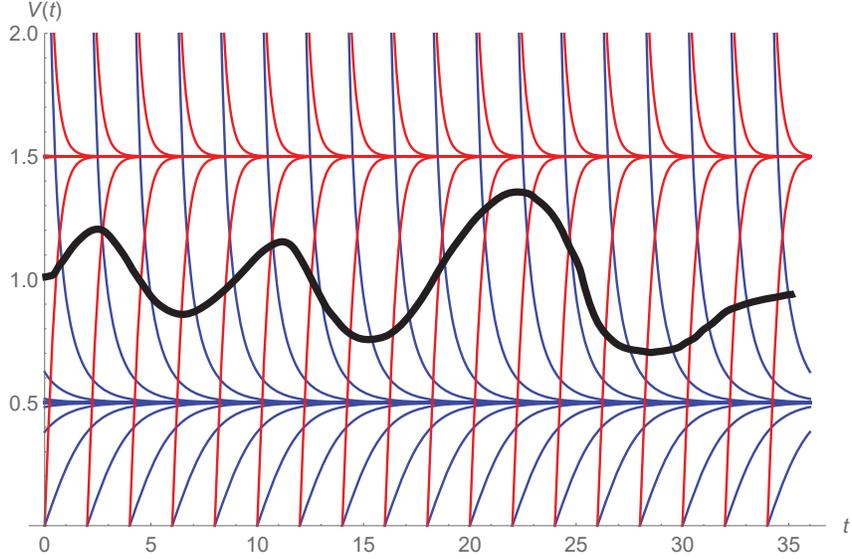}
\caption{\label{fig::ychart} {\bf Blue:} solutions to $\dot V(t) = \beta(t) - \phi(V(t))$ with $\beta(t) = \bmin=1/4$. Here $\gamma=1$ so $\phi(x)=x^2$. {\bf Red:} solutions with $\beta(t)=\bmax=9/4$. {\bf Black:} possible solution with $\beta(t)$ varying within $[\bmin,\bmax]$. Regardless of the initial $V(0)>0$, the (blue) solutions with $\beta(t)=\bmin$ for all time (no flexible activity) converges to $\vmin = \phi^{-1}(\bmin)=1/2$.  Similarly, the (red) solutions with $\beta(t) = \bmax$ for all time (maximal flexible activity) converge to $\vmax = \phi^{-1}(\bmax)=3/2$.  Generally, the $V(t)$ satisfying \eqref{eqn::Yconstraint} are Lipschitz continuous curves such that $\dot V(t)$ (which is a.e.\ defined) always lies {\em between} the derivatives of the blue and red curves that pass through $(t,V(t))$.  Other examples include {\bf bang-bang alternators}, where $V(t)$ alternates between tracing red and blue curves as in Figure~\ref{fig::examplechart} and {\bf constant functions} $V(t) = c$ for $c \in [\vmin,\vmax]$.  The {\bf special constant function} $V(t) = \gamma = 1$ is the curve for which $E(t)$, $I(t)$, and $\beta(t)=1$ all remain constant. The time unit is the mean incubation period; if mean incubation is $4$ days, then the $36$ units above represent 144 days.
}
\end{center}
\end{figure}


Now note that \eqref{eqn::logI} and the definition of $V$ imply that fixing the quadruple $\bigl( E(0), I(0), E(T), I(T) \bigr)$ modulo multiplicative constant is equivalent to fixing the triple $\bigl( V(0), V(T), \int_0^T V(t)dt \bigr)$.  So we rephrase Problem~\ref{prob::activity} as an equivalent problem of maximizing \eqref{eqn::A} given this triple. Since the latter three RHS terms are determined by the triple, this is equivalent to maximizing the first term $\int_0^T V(t)^2 dt$, hence equivalent to the following:

\begin{prob} \label{prob::activity2}
Given prescribed values for $V(0)$, $V(T)$, and $\int_0^T V(t) dt$, find a Lipschitz function $V$ that maximizes $\int_0^T V(t)^2dt$ subject to the constraint $\dot V(t) + \phi\bigl(V(t)\bigr)  \in [\bmin, \bmax]$ for all $t$, or equivalently
\begin{equation} \label{eqn::Yconstraint} \bmin - \phi(V(t)) \leq \dot V(t) \leq \bmax - \phi(V(t)).
\end{equation} Alternative phrasing: let $t$ be a random variable chosen uniformly from $[0,T]$ and choose $V$ to maximize the variance of $V(t)$ given \eqref{eqn::Yconstraint} and a prescribed value for the expectation of $V(t)$, as well as $V(0)$ and $V(T)$.
\end{prob}

See Figure~\ref{fig::ychart} for an intuitive picture of what the $V(t)$ satisying \eqref{eqn::Yconstraint} are like. For any $b > 0$ we write $\phi^{-1}(b)$ for the unique positive solution $x$ to $\phi(x) = b$.  By the quadratic formula, $\phi^{-1}(b) = \frac{(\gamma-1) + \sqrt{(\gamma-1)^2 + 4 b}}{2}$.  It is easily seen from \eqref{eqn::Y} that if we set $\beta(t) = b$ for all $t$, then $V(t)$ converges to $\phi^{-1}(b)$ as $t \to \infty$ (regardless of the initial value $V(0)$).  For short, write $\vmin=\phi^{-1}(\bmin)$ and $\vmax = \phi^{-1}(\bmax)$.  Note that if $V(0) \in [\vmin, \vmax]$ then $V(t) \in [\vmin, \vmax]$ for all time $t$, regardless of $\beta$.  

We also consider a constrained version:

\begin{prob} \label{prob::activity3}
Solve Problem~\ref{prob::activity} with the added constraint that $C_1 \leq \log I(t) \leq C_2$ for all $t \in [0,T]$.
\end{prob}

As motivation, note that imposing an upper bound on $\log I(t)$ is a way to ensure that a health care system is not overwhelmed and to limit the daily risk assumed by individual workers. It is also a crude way to ensure that the overall number of infections does not become too large: perhaps $I(t) = e^{C_2}$ is about the level at which the price of infection becomes unacceptable.  On the other side, if neighboring states and countries have not eliminated the disease, and are maintaining steady levels of infection, then cases may be reintroduced from those localities at some small but steady rate, which would effectively impose a lower bound on $\log I(t)$.\footnote{The upward drift on $\log I(t)$ caused by this influx, which is more pronounced when $\log I(t)$ is low, might be offset to some degree by contract tracing that is more effective when $\log I(t)$ is low. Other low-prevalence considerations (randomness, possible periods with no disease, large jumps due to superspreaders, etc.) are beyond the scope of this note.
An alternative to the rigid lower bound is to add an extra fixed-prevalence subpopulation, in the language of Section~\ref{sec::subpopulations}, that is only weakly connected to the other subpopulations. An alternative to the rigid upper bound is to subtract a multiple of $\int_0^T I(t)dt$ from the objective function, which would heavily penalize larger $\log I(t)$ values but would not matter much for smaller $\log I(t)$ values.}

\subsection{Physics analogy} \label{subsec::physics}

As an instructive metaphor, interpret $X(t) := \log I(t)+\gamma t$ as the {\em position} of a rocket-powered car along a frictionless street, the derivative $\dot X(t) = V(t)$ as the {\em velocity}, and the second derivative $\dot V(t) = \beta(t) - \phi(V(t))$ as the {\em acceleration}.  Interpret $\beta(t)$ as an {\em internal force} applied via the gas pedal and $-\phi(V(t))$ as an {\em external force} which is a quadratic function of the velocity, accounting for wind resistance and/or gravity.\footnote{If $\gamma=1$, then $-\phi(V(t)) = -V(t)^2$ is the standard {\em quadratic drag} used to model wind resistance. If $\gamma \not =1$ then $\phi(V(t)) = \bigl( V(t) - \frac{\gamma-1}{2}\bigr)^2 - \bigl( \frac{\gamma-1}{2}\bigr)^2$, which corresponds to a prevailing wind of speed $\frac{\gamma-1}{2}$ and a street sloped to yield a velocity-independent gravitational force of $\bigl( \frac{\gamma-1}{2}\bigr)^2$. The metaphor breaks down if $V(t) < \frac{\gamma-1}{2}$ (i.e., if the windspeed is forward but the car is moving slower than the wind) since in this case the force from the wind is in the wrong direction.}  Here $\bmin$ corresponds to the gas pedal not being pressed and $\bmax$ corresponds to a fully pressed pedal (there are no brakes). We can interpret $A$ as the total amount of fuel used\footnote{Assume fuel weight is small compared to car weight, so overall car weight does not change.} and imagine we are trying to waste as much fuel as possible subject to given boundary conditions.  When we impose the constraints $C_1 \leq \log I(t) \leq C_2$ we interpret them as bounding the car between two trucks moving at constant speed. See Figures~\ref{fig::trucks} and~\ref{fig::examplechart}.

\begin{figure}[ht!]
\begin{center}
\includegraphics[scale=0.9]{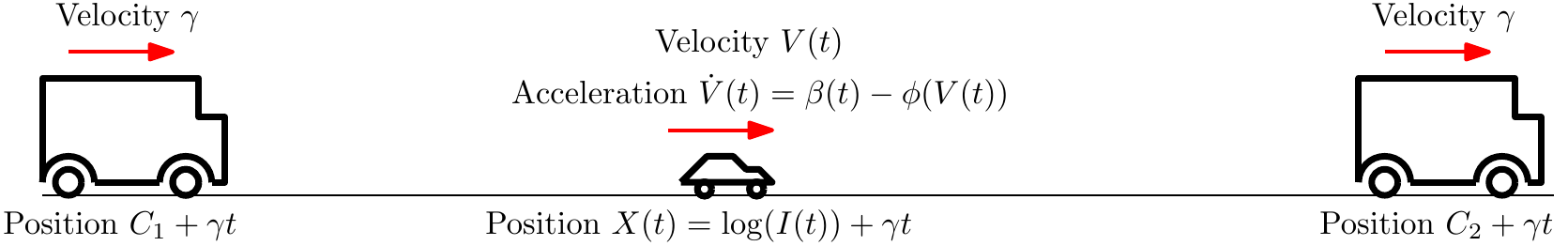}
\caption{\label{fig::trucks} The objective is to maximize $\int \beta(t)dt$ given the car's initial and final position and velocity. Because the external force $-\phi(V(t))$ is quadratic in the velocity $V(t)$ (and the mean velocity and acceleration are determined by the initial and final position and velocity) this is equivalent to maximizing $\int \phi(V(t))dt$, which in turn is equivalent to maximizing the variance of the car's velocity. Note that in Figure~\ref{fig::ychart}, a red curve started at height $\vmin =.5$ gets almost to $\vmax=1.5$ quickly (over a couple time units, i.e.\ a couple multiples of the mean incubation time); and a blue curve started at $\vmax=1.5$ gets almost to $\vmin=.5$ quickly. This means the car can quickly change speeds from (roughly) $\vmin$ to (roughly) $\vmax$ and back. It makes intuitive sense that, in order to maximize the car's velocity variance over a long time period, one might want to alternate between $\beta(t)=\bmax$ (holding down the pedal until the car is close to the leading truck) and $\beta(t)=\bmin$ (releasing the pedal until the car is close to the trailing truck) as in Figure~\ref{fig::examplechart}.}
\end{center}
\end{figure}

\begin{figure}[ht!]
\begin{center}
\includegraphics[scale=0.24
]{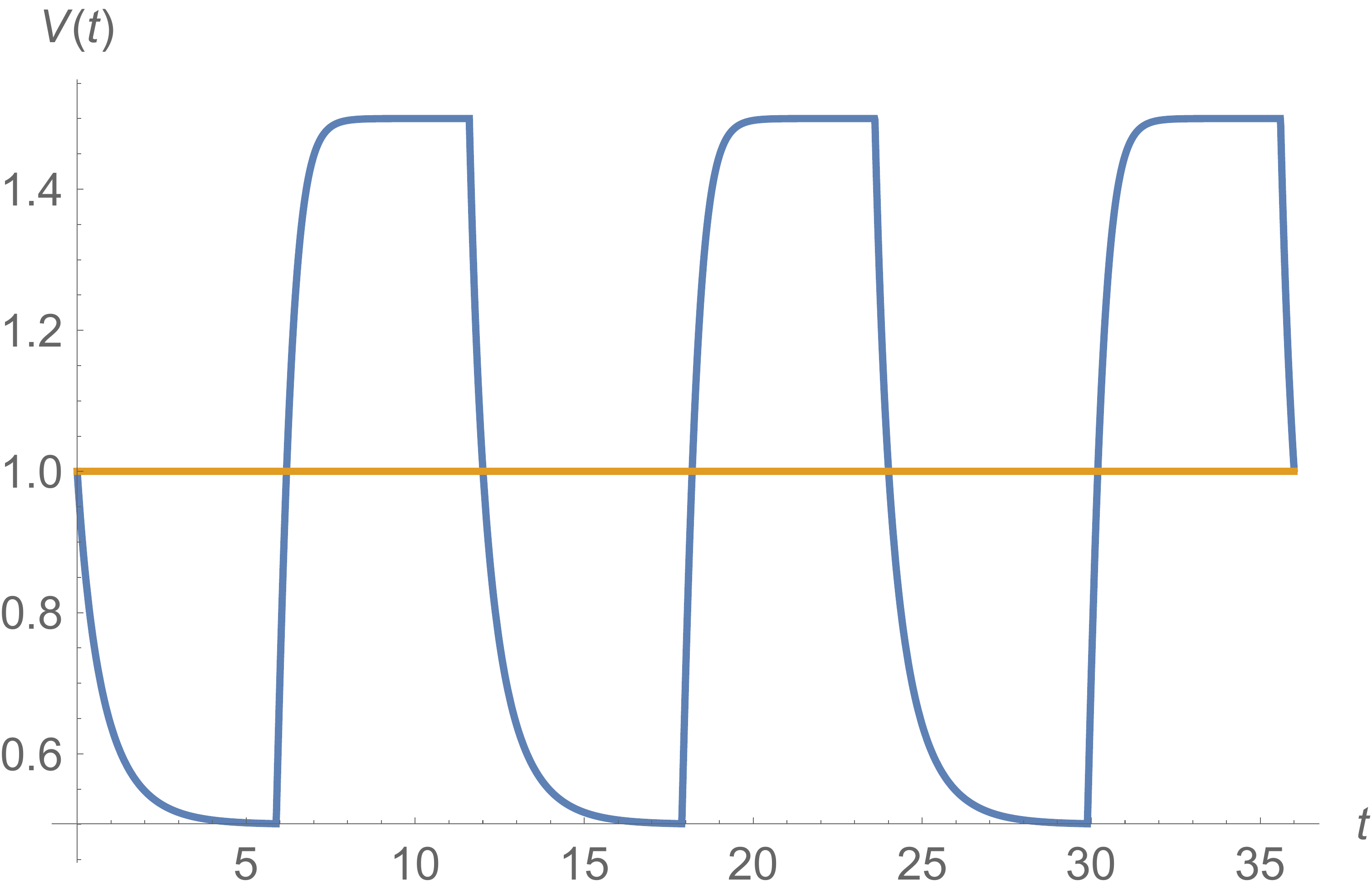}\hspace{.5in} \includegraphics[scale=0.25]{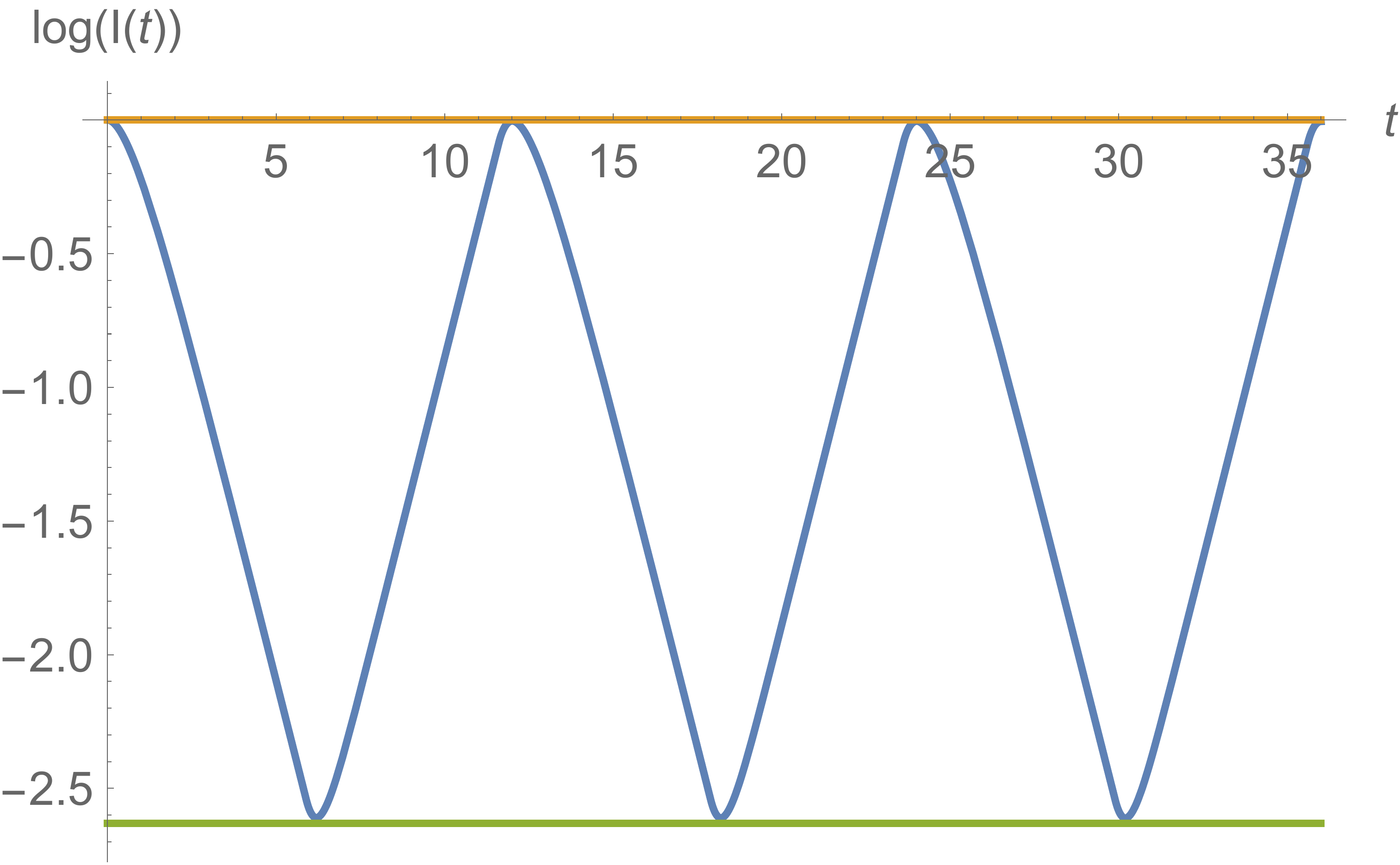}
\caption{\label{fig::examplechart} Example $V(t)$ in the setting of Figure~\ref{fig::ychart} (left) and the corresponding $\log I(t)$ (right) obtained by integrating $V(t)-\gamma$.  Here $\gamma=1$, $V(0) = \gamma$ and the function $V(t)$ makes six excursions away from $\gamma$ (three below and three above) which correspond to the six alternating intervals on which $\log I(t)$ decreases or increases. In this example, $\beta$ alternates between $\bmin$ and $\bmax$ and $\log I(t)$ oscillates between $C_2=0$ (orange line) and $C_1 \approx -2.6$ (green line) so that $I(t)$ changes by a factor of about $e^{2.6} \approx 13.5$. Each 12-unit period has $5.712$ units of $\beta(t)=2.25$ and $6.288$ units of $\beta(t)=.25$. The activity per 12-unit period is $6.288\cdot .25 +5.712 \cdot 2.25=14.424$ which is 20 percent higher than $12$. That is, the alternating $\beta$ scenario (blue curve) allows 20 percent more activity than the constant $\beta(t)=1$ scenario (orange curve).}
\end{center}
\end{figure}

\section{Results}
\subsection{Activity minimizers and maximizers}
The following is immediate from the statement of Problem~\ref{prob::activity2} and the fact that the variance of a constant random variable is zero.

\begin{prop} \label{prop::worstpossible}
In the setting of Problem~\ref{prob::activity2}, if one fixes $V(0) = V(T)  \in [\vmin,\vmax]$ and sets $\int_0^T V(t)dt = T\cdot V(0)$ then the {\em minimal} activity solution is the constant-velocity solution with $V(t) = V(0)$ for all $t$, which corresponds to $\beta(t) = \phi\bigl( V(0)\bigr)$ for all $t$.  In other words, if one aims to maximize $A$, constant velocity strategies are the {\em worst possible}.
\end{prop}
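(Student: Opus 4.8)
The plan is to recognize that, after the reduction leading to Problem~\ref{prob::activity2}, the objective $A$ is — up to additive constants fixed by the data — simply the second moment $\int_0^T V(t)^2\,dt$, and then to read off the minimizer directly from the nonnegativity of variance. First I would invoke \eqref{eqn::A}: the three terms $\int_0^T(1-\gamma)V(t)\,dt$, $V(T)$, and $V(0)$ are all determined by the prescribed triple $\bigl(V(0),V(T),\int_0^T V(t)\,dt\bigr)$, so minimizing $A$ over feasible $V$ is equivalent to minimizing $\int_0^T V(t)^2\,dt$. Letting $t$ be uniform on $[0,T]$ as in the alternative phrasing of Problem~\ref{prob::activity2}, we have $\frac1T\int_0^T V(t)^2\,dt = \Var\bigl(V(t)\bigr) + \bigl(\mathbb{E}[V(t)]\bigr)^2$, where $\mathbb{E}[V(t)] = \frac1T\int_0^T V(t)\,dt = V(0)$ is fixed by hypothesis. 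Hence minimizing the second moment is the same as minimizing $\Var\bigl(V(t)\bigr)$.

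Next I would use that variance is nonnegative and equals zero precisely when $V$ is (a.e.) constant. Since the prescribed mean is $V(0)$, the only constant achieving this is $V(t) \equiv V(0)$, which attains the global lower bound $\Var = 0$ and is therefore the unique minimizer of the second moment among admissible profiles. This candidate trivially meets the endpoint conditions $V(0) = V(T)$ and the integral condition $\int_0^T V(t)\,dt = T\cdot V(0)$.

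The one substantive point — really a routine check rather than a genuine obstacle — is to confirm that the constant $V \equiv V(0)$ is \emph{admissible}, i.e.\ satisfies \eqref{eqn::Yconstraint}. Since $\dot V \equiv 0$, the constraint collapses to $\phi\bigl(V(0)\bigr) \in [\bmin,\bmax]$. I would verify that $\phi$ is strictly increasing on $[\vmin,\vmax]$: its vertex sits at $\frac{\gamma-1}{2}$, and the quadratic-formula expression $\vmin = \frac{(\gamma-1)+\sqrt{(\gamma-1)^2+4\bmin}}{2}$ shows $\vmin > \frac{\gamma-1}{2}$ (using $\bmin>0$). Combined with $\phi(\vmin)=\bmin$ and $\phi(\vmax)=\bmax$, monotonicity gives $\phi\bigl(V(0)\bigr)\in[\bmin,\bmax]$ for every $V(0)\in[\vmin,\vmax]$. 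Thus the constant solution is feasible, it realizes $\beta(t)=\dot V(t)+\phi\bigl(V(t)\bigr)=\phi\bigl(V(0)\bigr)$ for all $t$, and — being the unique minimizer of the variance — it is the unique minimal-activity solution, as claimed.
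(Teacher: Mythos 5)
Your proof is correct and takes essentially the same route as the paper, which treats the proposition as immediate from the variance phrasing of Problem~\ref{prob::activity2} together with the fact that a constant random variable has zero variance. Your additional feasibility check that $\phi\bigl(V(0)\bigr) \in [\bmin,\bmax]$ is a sound (and routine) detail the paper leaves implicit in its remark that $\vmin = \phi^{-1}(\bmin)$ and $\vmax = \phi^{-1}(\bmax)$.
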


If $V(0) \not = V(T)$, then it is not possible to make the velocity variance exactly zero; but the worst possible strategy is still the one that makes this variance as small as possible. Glancing at Figure~\ref{fig::ychart},  it is intuitively clear that if one wanted to {\em maximize} the variance of $V(t)$, given its mean and its initial and final values, then one would ideally want $V(t)$ to spend most of its time near $\vmin=.5$ or near $\vmax=1.5$, with as little time as possible spent transitioning between the two sides. One might guess that if $V(0) \not= V(T)$ the optimal strategy would be this: first move $V(t)$ as quickly as possible toward one side of $[\vmin,\vmax]$ (the one accessible without crossing $V(T)$), then at some point move $V(t)$ as quickly as possible toward the other side, and then at some point move $V(t)$ as quickly as possible toward $V(T)$. This is correct and we formalize this as follows.

\begin{prop} \label{prop::bestpossible}
In the setting of Problem~\ref{prob::activity2}, if $V(T) > V(0)$, then any {\em optimal solution} has $\beta(t) = \bmax$ on a single interval, with $\beta(t) = \bmin$ before and after that.  If $V(T) < V(0)$, then the optimal solution has $\beta(t) = \bmin$ on a single interval, with $\beta(t) = \bmax$ before and after that.  If $V(T) = V(0)$ then there is an optimal solution of each of the two types mentioned above.
\end{prop}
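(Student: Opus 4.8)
The plan is to treat Problem~\ref{prob::activity2} as an optimal control problem with state $V$, control $\beta\in[\bmin,\bmax]$, dynamics $\dot V=\beta-\phi(V)$, running payoff $V(t)^2$, fixed endpoints $V(0),V(T)$, and the isoperimetric constraint $\int_0^T V\,dt=M$. First I would adjoin the integral constraint by introducing the auxiliary state $W(t)=\int_0^t V$, with $\dot W=V$ and $W(0)=0$, $W(T)=M$, turning this into a standard fixed-endpoint Lagrange problem in $(V,W)$. Applying the Pontryagin Maximum Principle with costates $(p,q)$ gives the Hamiltonian $H=V^2+p\bigl(\beta-\phi(V)\bigr)+qV$. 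Since $W$ does not appear in $H$, the costate $q$ is a constant $q_0$; and since $H$ is linear in $\beta$, the optimal control is bang-bang with switching function $p$: namely $\beta=\bmax$ where $p>0$ and $\beta=\bmin$ where $p<0$. The costate obeys the linear ODE $\dot p=\phi'(V)\,p-(2V+q_0)$.

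The key structural input is that the problem is autonomous, so $H$ is conserved along any optimal trajectory, $H\equiv H_0$. At a switching time $p=0$, so there $H_0=V^2+q_0V$; hence every switch occurs at one of the (at most two) velocities $V_-<V_+$ solving $v^2+q_0v=H_0$. Moreover the sign of $\dot p$ at a zero of $p$ is $-\operatorname{sign}(2V+q_0)$, which forces every $\bmin\to\bmax$ switch to occur at the smaller root $V_-$ and every $\bmax\to\bmin$ switch at the larger root $V_+$. Because a switch in finite time is impossible at $\vmin$ or $\vmax$ (where the relevant bang control gives $\dot V=0$), both roots are strictly interior, $\vmin<V_-<V_+<\vmax$. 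Consequently any optimal trajectory is a concatenation of congruent up-arcs (the unique $\bmax$-solution running from $V_-$ to $V_+$) and down-arcs (the unique $\bmin$-solution from $V_+$ to $V_-$), joined to two boundary arcs that connect to $V(0)$ and $V(T)$.

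The crux is to rule out more than one up-arc, i.e.\ more than one $\bmax$-interval. Suppose there were two consecutive $\bmin\to\bmax$ switches; between them lies one full congruent cycle, on a time interval of some length $\delta$, returning to $V_-$, on which $V$ takes all values in $[V_-,V_+]$. Its occupation measure therefore has some mean $\bar v\in(V_-,V_+)$ and, carrying mass strictly inside the interval, a second moment strictly below that of the two-point distribution on $\{V_-,V_+\}$ with the same mean $\bar v$. I would excise this cycle and reinsert, at moments when the trajectory sits at $V_-$ and at $V_+$, two constant dwell segments (feasible via the interior controls $\beta=\phi(V_\pm)\in[\bmin,\bmax]$) of durations $a,c\ge0$ chosen so that $a+c=\delta$ and $aV_-+cV_+$ equals the $\int V$ contribution of the deleted cycle. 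This preserves $T$, both endpoints, and $\int_0^T V\,dt$, while strictly increasing $\int_0^T V^2\,dt$ by the mean-preserving-spread inequality, contradicting optimality. The symmetric argument (using two $V_+$-visits) rules out more than one down-arc, so an optimal bang-bang control has at most one switch of each type, hence at most two switches.

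Finally, with at most two switches the only non-degenerate patterns are $(\bmin,\bmax,\bmin)$, having a single $\bmax$-interval, and $(\bmax,\bmin,\bmax)$, having a single $\bmin$-interval. I would close by comparing these two families for fixed $\bigl(V(0),V(T),M\bigr)$ and showing that $V(T)>V(0)$ selects the former and $V(T)<V(0)$ the latter, with both admissible and optimal when $V(T)=V(0)$. I expect this orientation step, together with making the excision-and-reinsertion bookkeeping fully rigorous, to be the main obstacle: the maximum principle and the conservation law pin down the shape of every optimizer up to the two mirror-image patterns, but distinguishing them is not a first-order condition and requires an explicit comparison of the two two-switch configurations.
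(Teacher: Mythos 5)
Your PMP reduction is set up correctly, and the switching analysis is sound as far as it goes: with the conserved Hamiltonian, up-switches can only occur at the smaller root $V_-$ and down-switches at the larger root $V_+$ of $v^2+q_0v=H_0$. But the argument has genuine gaps. A preliminary one: linearity of $H$ in $\beta$ gives bang-bang structure only where $p\neq 0$; you never exclude singular arcs, on which $p\equiv 0$ and $\dot p\equiv 0$ force $V\equiv -q_0/2$ with the interior control $\beta=\phi(V)$. PMP cannot eliminate these (they satisfy it trivially), so a separate variance-increasing perturbation is required---this is exactly what the paper's step showing $V$ cannot be locally constant except at $\vmin,\vmax$ supplies.

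The central gap is in the excision-and-reinsertion step. Consider the PMP-consistent configuration $(\bmin,\bmax,\bmin,\bmax)$: down from $V(0)$ to $V_-$, one full cycle $V_-\to V_+\to V_-$, then up to $V(T)$, with both $V(0)$ and $V(T)$ in $(V_-,V_+)$. (It is an extremal: conservation gives $p=(H_0-V^2-q_0V)/(\beta-\phi(V))$, which has the required sign on every arc.) This trajectory has two $\bmax$-intervals, so your argument must kill it; but after excising the cycle, the surviving trajectory only visits heights in $[V_-,\max(V(0),V(T))]$, so there is no moment at which it ``sits at $V_+$'' where the upper dwell could be inserted. Worse, if $\max(V(0),V(T))$ lies below the cycle's mean $\bar v$, no placement of dwells along the surviving trajectory can even match the mean of the excised mass; and if it lies above $\bar v$ but below $V_+$, the best available two-point measure need not beat the cycle's second moment, since the cycle carries mass above $\max(V(0),V(T))$. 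So exactly the trajectories the proposition forbids survive your surgery. Separately, the orientation step---that $V(T)>V(0)$ selects $(\bmin,\bmax,\bmin)$ rather than its mirror image---is part of the proposition's content and you explicitly leave it unproved; and the existence assertion in the $V(T)=V(0)$ case needs a compactness argument (the paper uses Arzel\`a--Ascoli) that PMP does not provide. The paper's own proof avoids all of these difficulties with a different mechanism: an occupation-measure rearrangement (Figure~\ref{fig::movingpeak}) showing that any optimizer crosses each horizontal line at most twice, combined with local flexibility perturbations. That surgery preserves the occupation measure outright and never needs a pre-existing dwell point on the surviving trajectory, which is precisely why it handles the configurations above while yours does not.
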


This is proved by showing that if $V$ does {\em not} have the form described then one can modify it in a way that increases $\int V(t)^2dt$ while keeping $\int V(t)dt$ the same. See Appendix~\ref{app::proofs} for details. A similar argument is made for more general objective functions in Appendix~\ref{app:generalutility}.

\begin{prop}\label{prop::wallconstrained} In the setting of Problem~\ref{prob::activity3},  with $\gamma \in (\vmin,\vmax)$, assume $\log I(0)$ and $\log I(T)$ are fixed values in $\{C_1,C_2 \}$ and $E(0)$ and $E(T)$ are fixed so that $V(0)=V(T)=\gamma$, as in Figure~\ref{fig::examplechart}.  Then in any optimal solution, $\beta(t)$ alternates between $\bmin$ and $\bmax$ finitely many times, and the $V(t)$ graph (like the one in Figure~\ref{fig::examplechart}) has finitely many excursions away from $\gamma$, which alternatively go above $\gamma$ ($\bmax$ on the way up, $\bmin$ on the way down) or below $\gamma$ ($\bmin$ on the way down, $\bmax$ on the way up).  All of these excursions have maximal area (i.e., they correspond to $\log I(t)$ crossing from $C_1$ to $C_2$ or back) except possibly for two smaller-but-equal-area excursions (which together correspond to $\log I(T)$ crossing from one of $\{C_1, C_2 \}$ to an intermediate value and back).  For more general boundary conditions, if an optimal solution $V$ hits $\{C_1,C_2\}$ at least once, and $t_1$ and $t_2$ are the first and last times this happens, then the restriction of $V$ to $[t_1,t_2]$ behaves as described above, while the restrictions to $[0,t_1]$ or $[t_2,T]$ each have the form described in Proposition~\ref{prop::bestpossible}.  If $V$ never hits a wall, then it must be of the form described in Proposition~\ref{prop::bestpossible}.
\end{prop}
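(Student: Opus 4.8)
The plan is to reduce everything to the unconstrained result (Proposition~\ref{prop::bestpossible}) together with a sequence of exchange (rearrangement) arguments, using the fact that, once $V(0)$, $V(T)$ and $\int_0^T V(t)\,dt$ are fixed, maximizing $A$ is the same as maximizing the variance of $V$. The crucial bookkeeping device is the identity $\frac{d}{dt}\log I(t)=V(t)-\gamma$ from \eqref{eqn::logI}: the wall constraint $C_1\le\log I(t)\le C_2$ is exactly a band constraint on the signed area $\int_0^t(V(s)-\gamma)\,ds$, and I would record at the outset the elementary observation that $\log I$ can touch a wall only at a local extremum, where $\frac{d}{dt}\log I=0$ and hence $V=\gamma$.

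First I would dispatch the boundary pieces and the no-wall case. Let $t_1$ and $t_2$ be the first and last times $V$ hits a wall. On $[0,t_1)$ and on $(t_2,T]$ the constraint $\log I\in(C_1,C_2)$ is strictly inactive, so the restriction of $V$ to $[0,t_1]$ must itself maximize $\int_0^{t_1}V^2$ subject to its own endpoints $V(0),V(t_1)$ and its own mean $\int_0^{t_1}V$; otherwise the exchange of Proposition~\ref{prop::bestpossible}, applied on $[0,t_1]$, would keep these data fixed (hence keep all of $\log I$ on $[t_1,T]$ as well as the global mean and endpoints fixed) while strictly increasing $\int V^2$, contradicting global optimality. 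This yields the claimed Proposition~\ref{prop::bestpossible} form on $[0,t_1]$ and on $[t_2,T]$, and taking the wall-hitting set empty covers the final case in which $V$ never touches a wall.

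It remains to analyze $[t_1,t_2]$. By the wall-touch observation, $V=\gamma$ at $t_1$, at $t_2$, and at every wall touch; the exchanges below will show that, apart from one exceptional pair, $V$ returns to $\gamma$ only at walls, so $[t_1,t_2]$ decomposes into excursions running wall-to-wall, alternately above and below $\gamma$. I would then run three rearrangements, each preserving the global mean, the endpoints, and feasibility, and each strictly raising $\int V^2$ unless the stated structure already holds: (i) a local bang-bang exchange forcing $\beta\in\{\bmin,\bmax\}$ a.e.\ and making each excursion single-peaked ($\bmax$ out, $\bmin$ back for an upward excursion, and the reverse downward), obtained by ``ironing'' any non-monotone or interior-$\beta$ portion toward the extreme curves of Figure~\ref{fig::ychart} at fixed area; (ii) a consolidation exchange replacing two same-direction excursions that both fail to be maximal by one maximal and one smaller excursion, at fixed total area and total duration; and (iii) the observation that idling at $V=\gamma$ contributes nothing to the variance (since here the mean of $V$ equals $\gamma$), so leftover time is spent on the smallest excursions consistent with the boundary data. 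The engine behind (ii)--(iii) is the monotonicity of variance-per-unit-time in excursion size: a direct computation with $\dot V=\bmin-\phi(V)$ and $\dot V=\bmax-\phi(V)$ shows that a bang-bang excursion of signed area $a$ contributes to $\int V^2$ an amount whose ratio to its duration increases in $a$ up to the maximal area $C_2-C_1$. Hence an optimal profile uses as many maximal excursions as the total time and the net area $\log I(T)-\log I(0)$ allow; the residual time can neither form another maximal excursion nor be idled at $\gamma$ without losing variance, so it is spent on a single canceling pair, whose two excursions must have equal areas in order to return $\log I$ to the correct terminal wall with $V(T)=\gamma$. This gives ``all excursions maximal except possibly two smaller-but-equal-area ones,'' and finiteness is then immediate, since each maximal excursion consumes a fixed duration $\tau>0$, so at most $\lceil T/\tau\rceil+2$ excursions occur.

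The main obstacle I anticipate is steps (ii)--(iii): carrying out the consolidation exchange so that it simultaneously respects the area (mean), the endpoint values, and the band constraint on $\log I$, and proving the variance-per-time monotonicity cleanly rather than by brute force. Convexity of $\phi$ and of $x\mapsto x^2$ should drive this, but the real work is in tracking how reallocating area between two excursions changes both their durations and their contributions to $\int V^2$ while holding $\log I$ inside $[C_1,C_2]$. A secondary technical point is justifying that the Proposition~\ref{prop::bestpossible} exchange used on the boundary pieces does not push $\log I$ out of the band on $[0,t_1)$ and $(t_2,T]$; this is guaranteed by the strict interior gap available there, but should be stated with care.
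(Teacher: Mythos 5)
Your skeleton is sound---wall touches force $V=\gamma$, the interval decomposes into wall-free boundary pieces plus wall-to-wall excursions, and your step (i) ``ironing'' to bang-bang single-peaked excursions is essentially the paper's first step. But both load-bearing claims after that are not actually established. Take the boundary pieces first: you argue the restriction of $V$ to $[0,t_1]$ must be the \emph{unconstrained} optimizer for its own boundary data, with feasibility of the improving exchange ``guaranteed by the strict interior gap.'' There is no such uniform gap ($\log I(t)$ approaches a wall as $t\to t_1$), and, worse, the exchange from Proposition~\ref{prop::bestpossible} is a \emph{global} rearrangement of $[0,t_1]$: it preserves endpoints and the mean of $V$, but its intermediate partial integrals $\int_0^t(V(s)-\gamma)\,ds$ can differ arbitrarily from the original's, so the rearranged profile can exit the band $[C_1,C_2]$ well inside $[0,t_1]$ even where the original had a healthy gap. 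Indeed the stronger statement you are aiming at (restriction equals the unconstrained optimum) can fail; the proposition only asserts the bang-bang \emph{form}. The paper gets that form from \emph{arbitrarily small, localized} perturbations on wall-free closed intervals (forcing $\beta\in\{\bmin,\bmax\}$ a.e.\ and every local extremum to be global, with a uniform gap supplied by compactness), then treats intervals whose endpoints touch walls by a limiting argument.

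The more serious gap is your steps (ii)--(iii), which are the heart of the proposition (at most one non-maximal excursion pair, with equal areas) and which you yourself flag as the unresolved obstacle. Your proposed engine---monotonicity of variance-per-unit-time in excursion area plus greedy packing---is unproven, and even granted it does not close the argument: for bang-bang excursions the duration is a function of the area, so a ``consolidation at fixed total area and total duration'' is overdetermined (two equations, one unknown) and generically impossible; monotonicity of the ratio alone does not adjudicate between $k$ maximal excursions plus idle time at $\gamma$, $k-1$ maximal plus two intermediate ones, or $k$ maximal plus a small canceling pair; and it yields neither the equal-area property of the residual pair nor the impossibility of idling. The paper's proof of this step requires no quantitative computation: if there were \emph{two} ``single wall excursions'' (each a non-maximal up/down pair), one moves occupation-measure mass from the tips of the smaller one to the larger one. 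The new function $W$ has the same occupation measure as $V$, hence the same $\int_0^T V\,dt$ and $\int_0^T V^2\,dt$; it stays in the band because both excursions were strictly non-maximal and the mass moved is small; and it has non-extremal slope on a set of positive measure, hence is strictly improvable---so $V$ was not optimal, a contradiction. Replacing your (ii)--(iii) with this rearrangement, and repairing the boundary-piece argument as above, is what the proof actually needs.
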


Proposition~\ref{prop::wallconstrained} formalizes the notion, suggested by Figures~\ref{fig::trucks} and~\ref{fig::examplechart}, that when $T$ is large, the optimal long-term strategy is to alternate between maximal forward acceleration and maximal reverse acceleration, timing the accelerations so that the car's velocity reaches $\gamma$ exactly as it reaches each truck.  On the other hand, one may have to break the Figure~\ref{fig::examplechart} pattern at a couple of turn-around points to ensure that the boundary conditions are satisfied.  The proof is similar to the proof of Proposition~\ref{prop::bestpossible}. One checks that if $V(t)$ does not have the asserted form then it is possible to make modifications to increase $\int V(t)^2dt$ while keeping $\int V(t)dt$ the same {\em and} keeping $\log I(t)$ within bounds. See Appendix~\ref{app::proofs}.

\subsection{Multiple subpopulations} \label{sec::subpopulations}
Suppose there are several {\em disjoint} subpopulations (factory workers in Town A, factory workers in Town B, students/teachers in Town A, students/teachers in Town B,  etc.) each of which has some amount of flexible activity that can be set independently. Suppose further that there is some interaction between members of different groups that does not depend on the level of flexible activity (e.g., because a student and a factory worker live in the same household). To formalize this, for $1 \leq k \leq n$ write \begin{equation} \label{eqn::note2} \dot E_k(t) = -E_k(t) + \beta_k(t) I_k(t) + \sum_{j \neq k} \alpha_{j,k} I_j(t), \,\,\,\,\,\,\,\,  \,\,\,\,\, \dot I_k(t) = E_k(t) - \gamma I_k(t),\end{equation} where for each $k$ the process $\beta_k:[0,T] \to [\bmin,\bmax]$ is a control parameter (a measurable function of $t$).  Letting $V_k(t) := E_k(t)/I_k(t)$, we find

\begin{align} \label{eqn::Yk} \dot V_k(t) &= \frac{ \dot E_k(t) I_k(t) - \dot I_k(t)E_k(t)}{I_k(t)^2} \\
&=  \frac{ \bigl(- E_k(t) + \beta_k(t)I_k(t) +  \sum_{j \neq k} \alpha_{j,k} I_j(t)\bigr) I_k(t) - \bigl(E_k(t) - \gamma I_k(t) \bigr) E_k(t)}{I_k(t)^2}  \nonumber \\
&=  \frac{  -E_k(t)I_k(t) + \beta_k(t) I_k(t)^2 - E_k(t)^2 + \gamma I_k(t)E_k(t) +  \sum_{j \neq k} \alpha_{j,k} I_j(t)\bigr) I_k(t)}{I_k(t)^2}    \nonumber \\
&= \beta_k(t) -V_k(t)^2 + (\gamma-1)V_k(t) + \sum_{j \neq k} \alpha_{j,k} I_j(t)/I_k(t)  \nonumber \\
&= \beta_k(t) -\phi(V_k(t)) +  \sum_{j \neq k} \alpha_{j,k} e^{X_j(t) - X_k(t)}, \nonumber
\end{align}
where again $\phi$ is the quadratic function defined by $\phi(x) := x^2 + (1-\gamma)x$. This also implies \begin{equation}\label{eqn::betak} \beta_k(t) = \dot V_k(t) + \phi(V_k(t))-  \sum_{j \neq k} \alpha_{j,k} e^{X_j(t) - X_k(t)}.\end{equation} We then have
\begin{align} \label{eqn::Ak}
A &= \sum_{k=1}^n \int_0^T \Bigl( \phi\bigl(V_k(t)\bigr) - \sum_{j \neq k} \alpha_{j,k} e^{X_j(t) - X_k(t)}\Bigr) dt + V_k(T)-V_k(0) \\ &= \sum_{k=1}^n\Bigl( \int_0^T  V_k(t)^2 dt  +  \int_0^T (1-\gamma) V_k(t) dt + V_k(T) - V_k(0) - \int_0^T \sum_{j \neq k} \alpha_{j,k} e^{X_j(t) - X_k(t)} dt\Bigr) . \nonumber
\end{align}

Removing the terms that depend only on the given boundary values, the objective becomes

\begin{equation} \label{eqn::Ak2}
\sum_{k=1}^n\Bigl( \int_0^T  V_k(t)^2 dt\Bigr)  - \sum_{j \neq k}\Bigl( \int_0^T \alpha_{j,k} e^{X_j(t) - X_k(t)} dt \Bigr),
\end{equation}
subject to the bounds on $\beta_k(t)$ and whatever initial and final values for the $V_k$ and $X_k$ are assumed.  If we assume further that $\alpha_{j,k} = \alpha_{k,j}$ (which might make sense if the subpopulations are of similar size) then we find that $A$ is a constant plus

\begin{equation} \label{eqn::Ak3}
\sum_{k=1}^n\Bigl( \int_0^T  V_k(t)^2 dt\Bigr)  - \sum_{j \neq k}\Bigl( \int_0^T \alpha_{j,k} \cosh(X_j(t) - X_k(t)) dt \Bigr).
\end{equation}

The first term is a measure of policy oscillation: it is large if the velocities $V_k(t)$ have large swings. The latter term is a measure of policy coordination: it is largest if prevalence does not differ too much from one subpopulation to another.  The fact that $A$ is equal to (a constant plus) \eqref{eqn::Ak3} can be summarized in English as follows:  {\bf once boundary values are fixed, activity is largest when policies are {\em coordinated} but {\em oscillatory}.}  On the other side, one might expect that when $T$ is large, activity-{\em minimizing} policies could involve traveling for long stretches in ``migrating bird'' patterns like the one in Figure~\ref{fig::vcars}, where only one subpopulation (or a small number of them) is substantially active, and others acquire small amounts of infection at a steady rate from the active subpopulations, despite being themselves relatively inactive.

\begin{figure}[ht!]
\begin{center}
\includegraphics[scale=0.7]{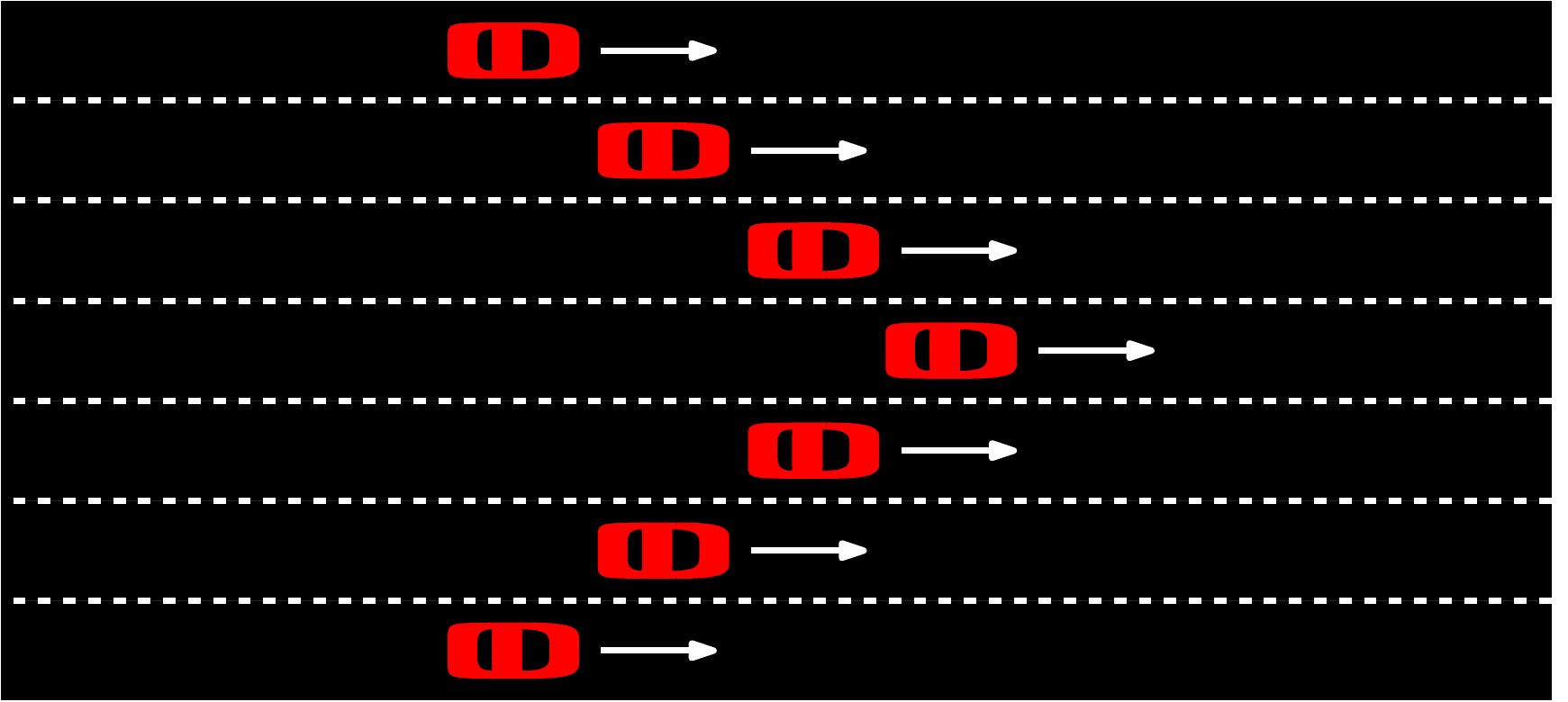}
\caption{\label{fig::vcars} Adapting the analogy in Figure~\ref{fig::trucks}, let $X_k(t)$ and $V_k(t)$ denote the rightward position and velocity of the $k$th car from the top. Let $\beta_k(t)$ be the corresponding internally generated force, describing how much fuel the $k$th car is using.  Assume that $\alpha_{j,k}=1$ if $|j-k|=1$ and $0$ otherwise (so cars only influence their neighbors). Per \eqref{eqn::Ak2}, if $|j-k| = 1$, then the amount of forward drafting force the $j$th car induces on the $k$th car is $e^{X_j(t)-X_k(t)}$. This force is small (but positive) if the $j$th car is behind the $k$th car, but it becomes large as the $j$th car gets far ahead of the $k$th car (which prevents the cars from getting too far apart). Assume that all cars have velocity $\gamma$ but the middle car uses a lot of fuel (so $\beta_4>> \bmin$) while the others use very little thanks to drafting forces (so that $\beta_k \approx \bmin$ for $k \not = 4$).  If a pattern like this is the {\em best possible} for saving fuel, then it is the {\em worst possible} for boosting activity.  In Figure~\ref{fig::examplechart} the activity gap between best and worst policy was 20 percent. In this figure, 6 of the 7 cars have very low $\beta(t)$ and (depending on the parameters) keeping all $X_k$ equal might allow for {\em several times} as much activity.
}
\end{center}
\end{figure}

\section{Discussion}
There are many factors we have not considered: implementation costs, inoculum size, contract tracing, herd immunity effects (which may be significant in subpopulations even if overall prevalence is low), unpredictable super-spreader events at low prevalence (perhaps quickly boosting cases from 1 per million to 100 per million), regional disease-elimination opportunities, subpopulation differences, etc. But at least within the models presented here, coordinated suppression (followed by relaxation and resuppression as needed) appears superior to temporal consistency and subpopulation variability. As noted in \cite{lockdownscount2020}, the benefits of staggering flexible activity are smaller when there is less flexible activity to stagger---but larger when more realistic incubation/infectious period distributions are incorporated into the model.

These findings recall the clich\'e that if everyone perfectly distanced for three weeks the disease would disappear. The clich\'e does not take into account that some contact cannot be eliminated, some infections last unusually long, etc. But this paper shows that within simple models that do account for these things, the same principle applies: people enjoy more contact overall when their activity is coordinated.

It may be {\em especially ineffecient} for some subpopulations to tightly close for the long term while others remain open enough to maintain a steady disease prevalence, as in the V-shaped pattern from Figure~\ref{fig::vcars}. We remark that one can imagine this type of pattern arising with no government action at all --- e.g., if individuals voluntarily reduce activity once prevalence nears a threshold, but that threshold differs among subpopulations. It could also arise if subpopulations pull in opposite directions due to differing preferences or needs; perhaps $\bmin$ and $\bmax$ differ from group to group, or perhaps some prefer, all things considered, to acquire substantial herd immunity through infection, while others prefer to keep prevalence low. There are many social, political and game theoretic issues we won't discuss.

Instead, we conclude by reiterating our main point: within the simple SEIR-based models discussed here, the {\em amount} of activity a society enjoys is higher when the activity is {\em staggered} and {\em coordinated}.

\begin{appendix}
\section{Proofs} \label{app::proofs}

\begin{proof}[Proof of Proposition~\ref{prop::bestpossible}]
It is not hard to see that the differentiable functions $V:[0,T] \to (0,\infty)$ that satisfy \eqref{eqn::Yconstraint} are precisely those that (in the language of Figure~\ref{fig::ychart}) never cross a blue curve from above to below and never cross a red curve from below to above.  More generally (without assuming differentiability of $V$) one may take this as a formal definition of what it {\em means} to satisfy \eqref{eqn::Yconstraint}.  Such functions are Lipschitz and hence differentiable outside of a set of Lebesgue measure zero by Rademacher's theorem, but $V$ may have points of non-differentiability, since $\beta$ may have discontinuities.

Taking this view, it is clear that the set of $V$ that satisfy these constraints---and have the given values of $V(0)$, $V(T)$, and $\int_0^T V(t)dt$ --- is compact w.r.t.\ the $L^\infty$ norm (precompactness follows from Arzel\`a-Ascoli and the constraint is clearly preserved under $L^\infty$ limits).

Suppose the given values for $V(0)$, $V(T)$, and $\int_0^T V(t)dt$ are such that there exists at least one $V$ with these values that satisfies \eqref{eqn::Yconstraint}. (The proposition statement holds trivially otherwise.) Then the existence of an {\em optimal} $V(t)$ follows from the continuity of $\int_0^T V(t)^2dt$ w.r.t.\ the $L^\infty$ norm and the above-mentioned compactness.  We aim to show that any such $V$ has the form described in the proposition statement.

Given any $V$ satisfying \eqref{eqn::Yconstraint}, we define a point $t \in (0,T)$ to be {\em taut} if either $\beta(t) = \bmax$ a.e.\ in a neighborhood of $t$ or $\beta(t) = \bmin$ a.e.\ in a neighborhood of $t$. In other words, $t$ is taut if $V(t)$ traces a blue curve or a red curve in a neighborhood of $t$.  We say that $t$ is a {\em sharp peak} if $\beta(t) = \bmax$ a.e.\ in $(t_1,t)$ and $\beta(t) = \bmin$ a.e.\ in $(t,t_2)$ for some $t_1<t<t_2$.  In other words, $V(t)$ traces a red curve to the left of $t$ and a blue curve to the right. Define a {\em sharp valley} analogously.  Call $t$ {\em upward-flexible} if it is neither taut nor a sharp peak. If $t$ is upward-flexible then one can modify $V(t)$ (shifting it ``upward'') in any small neighborhood of $t$ in a way that increases $\int_0^T V(t)dt$ by any sufficiently small amount while respecting  \eqref{eqn::Yconstraint}.  This can be done for example by replacing $V$ with the supremum of $V$ and a function that is taut except for a sharp peak that lies just above $V$ near $t$. The analogous statement holds if $t$ is {\em downward-flexible}, i.e., neither taut nor a sharp valley. Call a point {\em doubly flexible} if it is both upward and downward flexible.

Note that if $V(s) > V(t)$, and $s$ is upward-flexible and $t$ is downward-flexible, then one can increase $V$ in a neighborhood of $s$ and compensate by decreasing $V$ in a neighborhood of $t$ in a way that increases $\int_0^T V(t)^2dt$ while keeping $\int_0^T V(t)dt$ the same.  We conclude from this that if $V$ is optimal, the supremum of $V(s)$ over upward-flexible $s$ is at most the infimum of $V(s)$ over downward flexible $s$. In other words, there exists a $v$ such that all points on the graph of $V$ below height $v$ are either taut or sharp valleys, and all points on the graph of $V$ above height $v$ are either taut or sharp peaks.  Similar arguments show that $V(t)$ cannot be locally constant at $v$ unless $v \in \{\vmin, \vmax \}$ (in which case the height $v$ is crossed only once). 

If $V$ is optimal and {\em monotone non-increasing} (i.e., in a region where the red and blue curves are downward---this can happen in Figure~\ref{fig::ychart} if $V(0)$ and $V(T)$ are both greater than $\vmax$) then the above implies that there can be at most one sharp peak above $v$ (where $\beta(t)$ changes from $\bmax$ to $\bmin$) and at most one sharp valley below $v$ (where $\beta(t)$ changes from $\bmin$ to $\bmax$) and no doubly flexible points, which implies the proposition statement in this special case (similarly if $V$ is monotone non-decreasing).

If $V$ is not monotone, it still follows from the above that any excursions away from the horizontal line of height $v$ are either $V$-shaped (blue curve going down, red curve going up) or $\Lambda$-shaped (red curve going up, blue curve coming down). 
Define an {\em occupation measure} $\nu$ by letting $\nu(S)$ denote the Lebesgue measure of $\{t \in [a,b]: V(t) \in S \}$.  Clearly the occupation measure determines $\int_0^T V(t)dt$ and $\int_0^T V(t)^2dt$. So if $V$ is optimal then any $W$ with the same occupation measure, which also satisfies \eqref{eqn::Yconstraint}, must be optimal as well.

Now we will argue that if $V$ crosses any given horizontal line (i.e., passes from above to below or vice versa) more than twice then $V$ is not optimal. The idea is explained in Figure~\ref{fig::movingpeak}. Suppose (for sake of getting a contradiction) that there exists an interval $(t_1,t_2) \subset (0,T)$ such that $V(t_1) = V(t_2)$ and $V(t)>V(t_1)$ for $t \in (t_1,t_2)$. Suppose that there also exists another interval $(t_3,t_4) \subset (0,T)$, of positive distance from $(t_1,t_2)$, such that either $V(t_3)=V(t_1)$ or $V(t_4)=V(t_1)$ and $V(t)>V(t_1)$ for $t \in (t_3,t_4)$.  Then, as explained in Figure~\ref{fig::movingpeak}, one can ``rearrange'' $V$ to create a $W$ with the same occupation measure but with doubly flexible points of different heights.  The first steps in the figure should be self-explanatory, but the final ``flattening then $N$'' step requires explanation. Suppose generally that there is an interval $[a,b]$ such on this interval $V$ achieves its minimum at $a$ and its maximum at $b$ but $V$ is not monotone on $[a,b]$. We can then construct the unique continuous function $W(t)$ that agrees with $V$ outside of $(a,b)$, that has the same occupation measure as $V$, and that is monotone non-decreasing.  In other words, $W$ ``spends the same amount of time at each vertical height'' as $V$ but $W$ is non-decreasing (so it visits the heights strictly in increasing order). 
  It is then clear that $W$ satisfies \eqref{eqn::Yconstraint} and also that the slope is non-extremal at points of different heights, so that $W$ (and hence $V$) is suboptimal, which is the desired contradiction.

\begin{figure}[ht!]
\begin{center}
\includegraphics[scale=0.8]{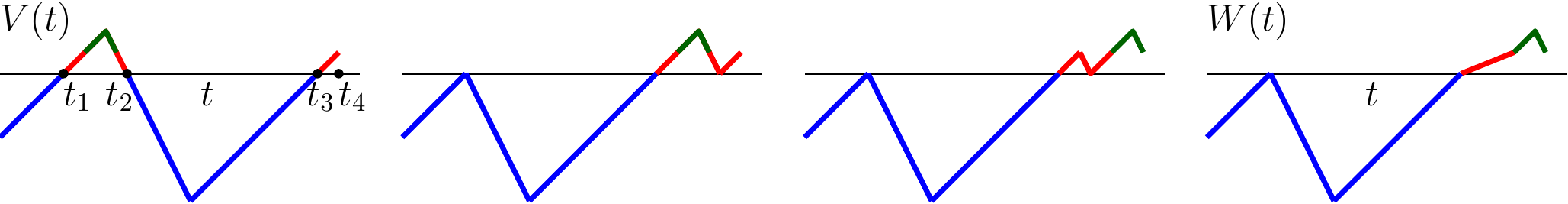}
\caption{\label{fig::movingpeak} First figure transformed into second figure by ``swapping'' red/green peak and blue valley. Second transformed into third by swapping green peak and red valley. Third transformed into fourth by ``flattening'' the red $N$ in a way that preserves the occupation measure but produces a curve of non-extremal slope.
}
\end{center}
\end{figure}

Now suppose $V(T) > V(0)$.  Since $V$ crosses every horizontal line at most twice, $V$ must be monotone non-decreasing between $V(0)$ and the first time $t_1$ at which $V$ reaches its maximum, monotone non-increasing between $t_1$ and the first time $t_2$ at which $V$ reaches its minimum, and monotone non-increasing between $t_2$ and $T$.  (We allow for the degenerate possibility that $0=t_1$ or $t_2 = T$.) Since $V$ has at most one height at which doubly flexible points occur, the above conditions (and the fact that $V$ cannot be locally constant) imply that $V$ cannot have any doubly flexible points, and indeed must have the form required in the proposition statement. A similar argument applies if $V(T) < V(0)$. If $V(T) = V(0)$, the same argument shows that $V$ must have one upward and one downward excursion away from its initial position---but these excursions can be made in either order, so there are two equally optimal solutions in this case. (The one that puts the downward excursion first would yield fewer {\em infections} but these are not included in the definition of $A$.) \end{proof}

\begin{proof}[Proof of Proposition~\ref{prop::wallconstrained}]

The arguments in the proof of Proposition~\ref{prop::bestpossible} imply that each excursion of $V$ above $\gamma$ must obtain the maximal possible slope (with $\beta(t)=\bmax$) on an initial interval and the minimal possible slope (with $\beta(t)=\bmin)$ for the rest of the excursion; in other words, in the language of Figure~\ref{fig::ychart}, its graph is a concatenation of a red curve and a blue curve. If this were not the case, we could apply the procedures in the proof of Proposition~\ref{prop::bestpossible} to this interval and produce a $W$ with higher second moment.  (Note that if we apply these procedures to an interval on which $V(t)>\gamma$, they do not change the fact that $\log I(t)$ is increasing over the course of that interval and they do not change the amount of increase, so they cannot lead to a violation of the $C_1 \leq \log I(t) \leq C_2$ constraint.)  Similarly, each excursion of $V$ below $\gamma$ consists of a maximally downward segment (blue) followed by a maximally upward segment (red).

Let us take the same argument a bit further. Suppose $V(t)$ is optimal and suppose that that $\log I(t)$ does not hit $C_1$ or $C_2$ during an interval $[s_1, s_2]$.  Then we claim that every local maximum (minimum) of $V$ in $(s_1,s_2)$ must be a global maximum (minimum).  First observe that $V$ must have derivative in $\{\bmin,\bmax\}$ almost everywhere within $(s_1,s_2)$, since otherwise (sufficiently small) perturbations like those described in the proof of  Proposition~\ref{prop::bestpossible} would increase  $\int_0^T V(t)^2 dt$ without violating the $C_1$ and $C_2$ conditions. Next, suppose that a local (but not global) maximum is obtained at $s$.  Then (since $V$ is not locally constant) by choosing arbitrarily small $\epsilon$, we can arrange so that the component of $\{t: V(t) > V(s) - \epsilon \}$ containing $s$ is arbitrarily small but non-empty. We can then ``redistribute'' the local time corresponding to that component elsewhere, as in Figure~\ref{fig::movingpeak}, to produce a $W$ with the same occupation measure as $V$, and if $\epsilon$ is small enough this redistribution will not change the fact that $C_1$ and $C_2$ fail to be hit, but it will also produce a positive mass of places where $\beta(t) \not \in \{\bmin, \bmax \}$, which enables an improvement to  $\int_0^T V(t)^2 dt$, which is a contradiction.  We conclude from this that within $[s_1,s_2]$, the function $V$ must take the form described in Proposition~\ref{prop::bestpossible}.  By taking limits, we find that the same is true if either or both of $\log I(s_1)$ and $\log I(s_2)$ lie in $\{C_1, C_2 \}$ but $V(s) \not \in \{C_1, C_2 \}$ for $s \in (s_1, s_2)$.

If $\log I(s_1) = \log I(s_2) = C_1$ then we must have $V(s_1) = V(s_2) = \gamma$ and in between $s_1$ and $s_2$ (recalling the statement of Proposition~\ref{prop::bestpossible}) $V$ makes one upward and one downward excursion away from $\gamma$, and the areas between these curves and the horizontal line at height $\gamma$ both have equal area as in Figure~\ref{fig::examplechart}. This area must be strictly less than $C_2-C_1$ if $C_2$ is never hit in $(s_1,s_2)$.  In this case (and the analogous case with the roles of $C_1$ and $C_2$ reversed) we refer to $(s_1, s_2)$ as a ``single wall excursion'' (since the same element of $\{C_1,C_2 \}$ is hit at both endpoints).  Similarly, if $\log I(s_1) = C_1$ and $\log(s_2) = C_2$ (but these two endpoints are avoided for $(s_1,s_2)$) then between $s_1$ and $s_2$ the function $V$ must make a single positive excursion enclosing the maximal possible area above $\gamma$, namely area $C_2 - C_1$, precisely as in Figure~\ref{fig::examplechart}. In this case we refer to $(s_1,s_2)$ as a ``double wall excursion.''

Next, we will argue that if there are {\em two} single wall excursions, then one can make one of the excursions bigger and the other one smaller in a way that does not change $\int_0^t V(t)dt$ and $\int_0^t V(t)^2dt$ but that makes it so that $\int_0^t V(t)^2dt$ is no longer maximal (a contradiction).  One of the two single wall excursions (call it ``smaller'') must have size less than or equal to that of the other (larger) one.  As we have done before (in  Figure~\ref{fig::movingpeak}) we can then ``move mass'' from near the tips of the corresponding upper/lower excursions of $V$ (away from $\gamma$) in the smaller one to the upper/lower excursions of $V$ (away from $\gamma$) in the larger one in a way that produces a new function that is suboptimal, and this yields the contradiction.

We conclude that there is at most one single wall excursion, and the rest of the proposition follows.
\end{proof}

\section{More general utility functions} \label{app:generalutility}
\subsection{Setup and motivation}

We now generalize our original setup to account for crowding effects.  In this setting, we consider two kinds of activity: first, $\mu(t)$ is the amount of {\em useful activity} taking place at time $t$.  Informally, think of $\mu(t)$ as encoding the (risk-weighted) number of conversations, restaurant meals, haircuts, etc. Second, $\beta(t)$ is the amount of {\em transmission activity} that drives the ODE \eqref{eqn::note}. Instead of making these quantities equal as before, we now assume they are related by $\mu(t) = u(\beta(t))$ where $u$ is increasing and continuous but potentially {\em non-linear}. We then denote the {\em total useful activity} by $U =\int_0^T \mu(t)dt =  \int_0^T u(\beta(t)) dt$ and assume that our goal is to maximize $U$, instead of maximizing $A = \int_0^T \beta(t)dt$.\footnote{In the car analogy, $\mu(t)$ is fuel use, $\beta(t)$ is force, but the relationship is non-linear. Maximizing fuel use is again the goal.}

One way to motivate this is to suppose that some fraction of the disease transmission $\beta(t)$ comes from {\em deliberate close interaction} (i.e., is spread between friends or coworkers choosing to engage in a valuable activity together) and that the rest comes from infectious air lingering in public places (hallways, subway cars, etc.) The former might be linear in $\mu(t)$ (twice as many conversations means twice as many chances for spread) but the latter might be quadratic in $\mu(t)$ (if the density of infectious particles in the air and the number of people inhaling them are both linear in $\mu(t)$).  Combining the two effects, we might find $\beta(t) = \psi( \mu(t)):= a_1 \mu(t) + a_2 \mu(t)^2$ where $a_1$ and $a_2$ are positive constants, and taking the positive inverse, \begin{equation}\label{eqn::ub} u(x) = \psi^{-1}(x)=
\frac{-a_1 + \sqrt{a_1^2 + 4a_2x}}{2a_2}.\end{equation} As a concrete example, suppose $a_1 = 3/4$ and $a_2=1/4$.  If $\beta(t)=\mu(t)=1$ then $25$ percent of the transmission comes from ``lingering air'' (the quadratic term). If $\beta(t)=2.25$ then (after solving for $\mu(t)$) about 38 percent of the transmission comes from lingering air; if $\beta(t)=.25$ it is only about $9$ percent. In this example, ``crowding effects'' play a larger role when activity is higher.\footnote{The assumption that $\mu(t)$ and $\beta(t)$ determine one another via a single function $u$ is a simplification. In principle, the relationship between $\mu(t)$ and $\beta(t)$ could vary in time. For example, perhaps deliberate interaction is a larger factor for weekend activities and lingering air is a larger factor on weekdays, so that the same $u$ cannot be used for both.}

We stress that $\mu(t)$ is a measure of the {\em amount} of useful activity---defined in a way that ensures $\beta(t) = u^{-1}(\mu(t))$.  It is {\em not} a measure of the {\em value} of the activity. If a more-valued activity causes the same amount of disease transmission as a less-valued activity, then it will make the same contribution to $\mu(t)$. We allow for the possiblity that in scenarios where $U$ is kept small, only very important activity will be allowed, but in scenarios where $U$ is large, more discretionary activity will occur. We only assume that utility is an {\em increasing function} of $U$ so that maximizing $U$ is a reasonable objective.

We can generalize Problem~\ref{prob::activity} by replacing $A$ with \begin{equation} \label{eqn::Udef} U = \int_0^T u\Bigl( \beta(t)\Bigr)dt = \int_0^T u \Bigl( \dot V(t) + \phi(V(t)) \Bigr)dt,\end{equation} where $u$ is some fixed twice-differentiable function, and as before we write $\phi(x) = x^2 + (1-\gamma)x$. The setup in Problem~\ref{prob::activity} amounts to taking $u(b) = b$ for $b \in [\bmin,\bmax]$.

Now let us express \eqref{eqn::Udef} a different way. Write
\begin{equation}\label{eqn::Gy} G_y(x) := u(x+\phi(y)) - u(\phi(y)) - u'(\phi(y))x. \end{equation}  Observe that for all $y$ we have $G_y(0) = G_y'(0)=0$.
If $u$ is (strictly) concave then (for fixed $y$) $G_y$ is (strictly) concave, and has a maximum at $0$. To ensure that \eqref{eqn::Gy} makes sense for relevant inputs, it will be convenient for us to extend the definition of $u$ beyond $[\bmin,\bmax]$ to the full range of $\phi$ (which is all of $[0,\infty)$ when $\gamma=1$) in such a way that $u$ remains concave. It does not matter exactly how we do this, but one natural approach is to assume $u$ is differentiable everywhere but affine---or strictly concave but nearly affine---outside of $[\bmin,\bmax]$. Now we can rewrite \eqref{eqn::Gy} with $x = \dot V(t)$ and $y = V(t)$ to obtain

\begin{equation}
u \Bigl( \dot V(t) \!+ \! \phi(V(t)) \Bigr) \!=\! G_{V(t)}\bigl(\dot V(t)\bigr) + u\bigl(\phi(V(t))\bigr)  + u'\bigl(\phi(V(t))\bigr)\dot V(t)
\end{equation}
and substituting this into \eqref{eqn::Udef} yields
\begin{equation} U= \int_0^T u \Bigl( \dot V(t) \!+ \! \phi(V(t)) \Bigr)dt \!=\! \int_0^T \Bigl( G_{V(t)}\bigl(\dot V(t)\bigr) + u\bigl(\phi(V(t))\bigr)  + u'\bigl(\phi(V(t))\bigr)\dot V(t) \Bigr)dt .\end{equation}

The final RHS term can be written $u'\bigl(\phi(V(t))\bigr)\dot V(t) = \frac{\partial}{\partial t} r(V(t)) = r'(V(t)) \dot V(t)$ where $r'(x) = u'(\phi(x))$, i.e., $r(a) := \int_0^a u'(\phi(x))dx.$  Thus the final RHS term integrates to a quantity that depends only on $V(T)$ and $V(0)$.
If we remove that, the objective becomes \begin{equation}\label{eqn::phi}\int_0^T  u\bigl(\phi(V(t))\bigr) dt + \int_0^T  G_{V(t)}\bigl(\dot V(t)\bigr) dt\end{equation}
Writing it this way, we have separated the objective into two pieces: the first term ascribes different benefits to different velocities via the function $u \circ \phi$. The second (non-positive) term ascribes costs to non-zero acceleration rates (in a manner that also depends on velocity).

We can now formulate the problem in the language of Problem~\ref{prob::activity2} as follows:

\begin{prob} \label{prob::activityu}
Given $V(0)$, $V(T)$, and $\int_0^T V(t) dt$, find a $V$ that maximizes \eqref{eqn::phi} subject to \eqref{eqn::Yconstraint}.
\end{prob}

Now suppose that $u(x)$ is concave in $x$ but $u(\phi(x))$ is strictly convex.  The latter holds if $u(x) = x$ but fails if $u$ is ``too concave.'' To illustrate what this means, consider the $\gamma=1$ case.\footnote{In a probabilistic formulation of the SEIR model, setting $\gamma=1$ corresponds to assuming that the incubation time and the infectious time are independent exponential random variables with the same rate $1$.  If $f(x) = x e^{-x}$ is the density function for the {\em sum} of these positive random variables then $\int_a^b f(t) \beta(t) dt$ is the expected number of people infected between time $a$ and $b$ by a person infected at time $0$.  If $\gamma$ is either very small or very large, then the corresponding $f$ is approximately exponential, and the model is effectively more like an SIR model; taking $\gamma$ close to one ensures that $f$ is more concentrated (a smaller standard deviation relative to its mean). If the true $f$ is {\em actually much more} concentrated than $xe^{-x}$ then the models of this paper are inadequate, and a different approach is needed (such as Erlang SEIR with a higher Erlang parameter, see \cite{lockdownscount2020}). Still, $\gamma=1$ might be the best approximation {\em within} the framework of this paper.} In this case $\phi(x) = x^2$ so $u(\phi(x)) = u(x^2)$ is strictly convex if $u(x) = x^\alpha$ for $\alpha \in (1/2,1]$, but not if $u(x) = x^\alpha$ for $\alpha \leq 1/2$. Note that $u(x^2)$ is also strictly convex if $u$ is as given in \eqref{eqn::ub}, for any positive $a_1$ and $a_2$.

Since the mean of $V$ is fixed, the first term of \eqref{eqn::phi} is the worst possible if $V$ is constant (by Jensen's inequality). But the second term penalizes fluctuation (i.e., one pays a price for non-zero derivative) so these two factors work against each other.  On the other hand, if $V$ varies slowly, the second term should not matter very much. Proposition~\ref{prop::worstwithu} below is a simple illustration of that point.

\subsection{Best and worst policies}
If $u$ is concave, then a rapidly fluctuating $\beta$ may yield a lower $U$ than a constant $\beta$ with the same mean so that (in contrast to Proposition~\ref{prop::worstpossible}) constant strategies are not the worst possible. On the other hand, Proposition~\ref{prop::worstwithu} states that constant $V(t)$ are the worst possible (given the corresponding boundary data) among functions that {\em vary slowly} in the sense of having no Fourier modes of short wavelength.

\begin{prop}\label{prop::worstwithu}
Suppose that $u(x)$ is smooth and concave in $x$ but $u(\phi(x))$ is smooth and strictly convex. 
Then there exists a $C>0$ (independent of $T$ or the boundary data) such that constant-$V(t)$ strategies are the worst possible (i.e, $U$-minimizing, given constraints from Problem~\ref{prob::activityu}) among all differentiable $V(t)$ whose Fourier series decompositions on the interval $[0,T]$ include no mode with wave length less than $C$.
\end{prop}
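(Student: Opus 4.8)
The plan is to work directly with the objective in the form \eqref{eqn::phi}, namely $\int_0^T u\bigl(\phi(V(t))\bigr)\,dt + \int_0^T G_{V(t)}\bigl(\dot V(t)\bigr)\,dt$, and to show that every admissible competitor $V$ satisfies $U[V]\geq U[\text{const}]$. First I would observe that a $V$ with no Fourier mode of wavelength below $C$ is a \emph{finite} trigonometric polynomial on $[0,T]$, hence $T$-periodic with $V(0)=V(T)$. Consequently the boundary term $r(V(T))-r(V(0))$ discarded in passing to \eqref{eqn::phi} vanishes, so $U[V]$ equals \eqref{eqn::phi} exactly (the same holds trivially for the constant). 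Writing $\bar V:=\tfrac1T\int_0^T V$ and $W:=V-\bar V$, so that $\int_0^T W=0$ and $\dot V=\dot W$, the constant competitor is $V\equiv\bar V$ with objective $T\,u\bigl(\phi(\bar V)\bigr)$. Feasibility of the constant strategy forces $\bar V\in[\vmin,\vmax]$, and since $V(0)=\bar V\in[\vmin,\vmax]$ keeps $V(t)\in[\vmin,\vmax]$ for all $t$, every competitor takes values in the fixed compact interval $[\vmin,\vmax]$.

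Next I would bound the two terms separately. For the first term, set $g:=u\circ\phi$; since $g$ is smooth with $g''>0$ on $[\vmin,\vmax]$ (see the final paragraph), $m:=\min_{[\vmin,\vmax]}g''>0$, and a second-order Taylor expansion about $\bar V$ using $g''\geq m$ gives $g(V(t))\geq g(\bar V)+g'(\bar V)W(t)+\tfrac m2 W(t)^2$; integrating and using $\int_0^T W=0$ yields $\int_0^T g(V)\,dt-T g(\bar V)\geq \tfrac m2\int_0^T W^2\,dt$. For the second term, the Taylor form $G_{V(t)}\bigl(\dot V(t)\bigr)=\tfrac12 u''(\xi)\dot V(t)^2$ holds with $\xi$ between $\phi(V(t))$ and $\phi(V(t))+\dot V(t)=\beta(t)$. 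Because $\phi$ maps $[\vmin,\vmax]$ onto $[\phi(\vmin),\phi(\vmax)]=[\bmin,\bmax]$ and $\beta(t)\in[\bmin,\bmax]$ by \eqref{eqn::Yconstraint}, the point $\xi$ always lies in $[\bmin,\bmax]$, so with $M:=\max_{[\bmin,\bmax]}|u''|<\infty$ we obtain $\int_0^T G_{V(t)}\bigl(\dot V(t)\bigr)\,dt\geq -\tfrac M2\int_0^T \dot W^2\,dt$. Crucially, $m$ and $M$ depend only on $u,\gamma,\bmin,\bmax$, not on $T$ or the boundary data.

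The final ingredient is a spectral comparison of $\int \dot W^2$ and $\int W^2$. Expanding $W=\sum_k c_k e^{2\pi i kt/T}$ (no constant mode, since $\int_0^T W=0$), Parseval gives $\int_0^T W^2\,dt=T\sum_k|c_k|^2$ and $\int_0^T \dot W^2\,dt=T\sum_k (2\pi k/T)^2|c_k|^2$, with termwise differentiation legitimate because $W$ is a trigonometric polynomial. The band-limiting hypothesis forces every present mode to satisfy $T/|k|\geq C$, i.e.\ frequency $2\pi|k|/T\leq 2\pi/C$, hence $\int_0^T\dot W^2\,dt\leq (2\pi/C)^2\int_0^T W^2\,dt$. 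Combining the three estimates,
\begin{equation*}
U[V]-T\,u\bigl(\phi(\bar V)\bigr)\;\geq\;\tfrac12\Bigl(m-M(2\pi/C)^2\Bigr)\int_0^T W^2\,dt,
\end{equation*}
which is nonnegative as soon as $C\geq 2\pi\sqrt{M/m}$. Since this $C$ is independent of $T$ and of the boundary data, the claim follows.

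I expect the main obstacle to be conceptual rather than computational: one must recognize that the needed inequality $\int\dot W^2\leq\lambda^2\int W^2$ runs \emph{opposite} to Wirtinger/Poincar\'e and is a reverse (Bernstein-type) bound valid \emph{only} under band-limiting — this is precisely where the ``no short-wavelength modes'' hypothesis is used, and why no finite $C$ could work for unrestricted $V$ (a high-frequency oscillation makes the penalty term dominate). A secondary technical point is securing $m>0$: this needs $g''=(u\circ\phi)''$ to be bounded below by a positive constant on $[\vmin,\vmax]$, which is marginally stronger than strict convexity (it can fail for a strictly convex $g$ with a degenerate inflection, such as $x^4$ at $0$) but holds for all the examples here since $[\vmin,\vmax]\subset(0,\infty)$. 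Finally one must check the uniformity of $m,M$ and the vanishing of the boundary correction, both of which rest on the confinement $V(t)\in[\vmin,\vmax]$ and the periodicity $V(0)=V(T)$ that band-limiting automatically supplies.
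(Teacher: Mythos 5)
Your proof is correct and follows essentially the same route as the paper's: remove the affine-in-$V$ contribution of $u\circ\phi$ (you via Taylor expansion about $\bar V$ using $\int_0^T W\,dt=0$, the paper via subtracting the tangent line $L$ at $v$), bound the acceleration penalty below by a constant times $-\dot V(t)^2$, and finish with the Parseval comparison $\int_0^T \dot W^2\,dt \leq (2\pi/C)^2\int_0^T W^2\,dt$ valid for band-limited $W$, arriving at the same constant $C = 2\pi\sqrt{M/m} = 2\pi\sqrt{c_2/c_1}$. Your two refinements---localizing the Taylor point $\xi$ in $[\bmin,\bmax]$ so that no extension of $u$ beyond $[\bmin,\bmax]$ is needed, and flagging that one needs $\min (u\circ\phi)''>0$ rather than bare strict convexity---are minor sharpenings of details the paper leaves implicit.
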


Note that this proposition holds trivially if $T < C$, and hence provides no information in that case. It does not rule out the possibility that constant strategies are optimal over very short time periods.

\begin{proof}
Write $v$ for the fixed value of $V(0)=V(T)= T^{-1}\int_0^T V(t)dt$.  Let $L$ be the affine function tangent to $u\circ \phi$ at $v$, and write $\tilde u = u \circ \phi - L$.  Then we can write the  first term of \eqref{eqn::phi} as $\int_0^T L(V(t))dt + \int_0^T \tilde u (V(t))dt$. Since $\int_0^T L(V(t))dt$ is fixed by the boundary data, we can ignore that term, so the objective becomes 

\begin{equation}\label{eqn::phi2}\int_0^T  \tilde u \bigl(V(t))\bigr) dt + \int_0^T  G_{V(t)}\bigl(\dot V(t)\bigr) dt\end{equation}

We can assume that $V(0) = V(T) \in (\vmin,\vmax)$ (since otherwise there would only be one or zero possible solutions with the same boundary values for $V$ and with $\int_0^T V(t) = T V(0)$, and the proposition statement would be trivially true). Thus, in $\eqref{eqn::phi2}$ we need only to evaluate $\tilde u(v)$ for $v \in (\vmin,\vmax)$ and $G_{v}(x)$ for $v \in (\vmin,\vmax)$ and $x$ in the {\em bounded range} of $\dot V(t)$ values possible when $V(t) \in [\vmin,\vmax]$.

Within this range, because of the convexity and smoothness assumptions, there exists a $c_1>0$ such that $\tilde u (x) \geq c_1 (x-v)^2$ and there exists a $c_2$ such that $0 \geq G_v(x) \geq - c_2 x^2$. Thus 

\begin{equation}\label{eqn::phi3}\int_0^T  \tilde u \bigl(V(t))\bigr) dt + \int_0^T  G_{V(t)}\bigl(\dot V(t)\bigr) dt \geq c_1 \int_0^T (V(t)-v)^2 dt - c_2 \int_0^t \dot V(t)^2 dt\end{equation}

If we write $V_k(t) = e^{k 2\pi i t /T}$ for the $k$th Fourier mode, then $\int_0^T |\dot V_k(t)|^2dt = (2\pi k/T)^2 \int_0^T |V_k(t)|^2dt$.  As long as $(2\pi k/T)^2 \leq c_1/c_2$, \eqref{eqn::phi2} will be negative if we set $V(t) = v+V_k$, and by orthogonality of the Fourier series, the same applies to any linear combination of Fourier modes $V_k$ such that $(2\pi k/T)^2 \leq c_1/c_2$, or equivalently $k/T \leq \sqrt{c_1/c_2}/(2\pi)$,  which means that the wavelength $T/k$ satisfies $T/k \geq 2\pi \sqrt{c_2/c_1}$. \end{proof}

\begin{prop} \label{prop::bestwithu}
In the context of Proposition~\ref{prop::worstwithu}, the best possible ($U$-maximizing) $V(t)$ cannot cross any horizontal line more than twice in $(0,T)$. If $V(0) < V(T)$ and we write $m:=\inf_{t \in [0,T]}V(t)$ and $M:=\sup_{t \in [0,T]}V(t)$, then $V$ must be monotone non-increasing between time $0$ and the first time it hits $m$, then monotone non-decreasing until the first time it hits $M$, then monotone non-increasing again until time $T$.  (Similar statements hold if $V(0) > V(T)$ or $V(0)=V(T)$, c.f.\ Proposition~\ref{prop::bestpossible}.)
\end{prop}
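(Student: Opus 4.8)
The plan is to mirror the surgery proof of Proposition~\ref{prop::bestpossible}, organized around the decomposition \eqref{eqn::phi} of the objective into $\mathcal{U}_1(V) := \int_0^T u(\phi(V(t)))\,dt$, which depends on $V$ only through its occupation measure, and $\mathcal{U}_2(V) := \int_0^T G_{V(t)}(\dot V(t))\,dt$, where each $G_y$ is concave, non-positive, and satisfies $G_y(0)=G_y'(0)=0$. First I would record existence of a $U$-maximizer: the admissible controls form a weak-$*$ compact subset of $L^\infty([0,T],[\bmin,\bmax])$, the map $\beta\mapsto V$ sends weak-$*$ convergent controls to uniformly convergent trajectories (the ODE is affine in the control), so the boundary constraints are preserved in the limit, and $\beta\mapsto\int_0^T u(\beta)\,dt$ is weak-$*$ upper semicontinuous because $u$ is concave. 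This replaces the $L^\infty$-continuity argument of Proposition~\ref{prop::bestpossible} with upper semicontinuity, and a maximizer exists.

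Next I would argue, exactly as in Proposition~\ref{prop::bestpossible}, that an optimal $V$ crossing some horizontal line more than twice can be improved by the two moves of Figure~\ref{fig::movingpeak}: reordering (``swapping'') monotone excursions, and ``flattening'' a piece $[a,b]$ on which $V$ attains its minimum at $a$ and maximum at $b$ into its monotone non-decreasing rearrangement $W$. Both moves preserve the occupation measure, hence $\mathcal{U}_1$; the new content is to check they behave correctly for $\mathcal{U}_2$. Swaps preserve $\mathcal{U}_2$ because $\int G_V(\dot V)\,dt$ depends only on the multiset of (position, speed) pairs the curve traverses, not on the time-order of the monotone pieces, and a swap merely re-times those pieces (translating them in time) without reversing the direction of traversal of any of them.

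The crux is the flattening step, which I expect to be the main obstacle: I must show $\mathcal{U}_2(W)\ge\mathcal{U}_2(V)$, strictly when $V$ is non-monotone on $[a,b]$, and that $W$ still obeys \eqref{eqn::Yconstraint}. For the inequality I would use a co-area decomposition: for a.e.\ height $h$, if $V$ crosses $h$ with signed speeds $v_1,\dots,v_k$, that height contributes $\sum_j G_h(v_j)/|v_j| = \ell(h)\sum_j p_j G_h(v_j)$ to $\mathcal{U}_2$, where $p_j = (1/|v_j|)/\ell(h)$ and $\ell(h)=\sum_j 1/|v_j|$ is the rearrangement-invariant occupation density. Concavity of $G_h$ and Jensen give $\sum_j p_j G_h(v_j)\le G_h\bigl(\sum_j p_j v_j\bigr)$, and the identity $\sum_j p_j v_j = \ell(h)^{-1}\sum_j \operatorname{sgn}(v_j) = 1/\ell(h)$ (the net winding across a min-to-max piece is $+1$ at every intermediate height) shows this equals $G_h(1/\ell(h))$, which is exactly the contribution of the single up-crossing of $W$. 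Hence $W$ wins at every height, strictly wherever $V$ has an extra down-then-up pair of crossings (strict concavity of $G_h$ with speeds of differing sign). For feasibility, the point is that surgery may be confined to the invariant band $[\vmin,\vmax]$: outside it the constraint forces $V$ to be monotone, so each such height is already crossed at most once, while inside one has $\bmin-\phi(h)\le 0\le\bmax-\phi(h)$, so the non-extremal slope $1/\ell(h)$ of $W$ is admissible (the upper bound holding because a single up-crossing already forces $\ell(h)\ge(\bmax-\phi(h))^{-1}$). The measure-theoretic care needed to run the co-area argument for a merely Lipschitz $V$, together with this feasibility bookkeeping, is where the real work lies.

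Granting the two moves, optimality forces $V$ to cross every horizontal line at most twice: otherwise (inheriting the combinatorics of Proposition~\ref{prop::bestpossible}) some sequence of swaps exposes a non-monotone min-to-max piece whose flattening strictly raises $\mathcal{U}_2$ while fixing $\mathcal{U}_1$, $V(0)$, $V(T)$, and $\int_0^T V\,dt$, a contradiction. Finally I would deduce the stated shape by elementary analysis: a continuous function crossing each level at most twice has at most one interior local minimum and one interior local maximum, and when $V(0)<V(T)$ the three-crossing obstruction rules out the ``max-before-min'' order, forcing $V$ to descend to $m$, ascend to $M$, then descend to $V(T)$, as claimed (and symmetrically when $V(0)>V(T)$ or $V(0)=V(T)$, c.f.\ Proposition~\ref{prop::bestpossible}).
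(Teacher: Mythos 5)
Your proposal is correct and takes essentially the same route as the paper's proof: the same decomposition \eqref{eqn::phi} into an occupation-measure term and an acceleration-penalty term, the same Figure~\ref{fig::movingpeak} surgery, and the same key flattening step — your co-area/Jensen computation at each height (weights $p_j \propto 1/|v_j|$, mean crossing speed $1/\ell(h)$) is precisely the paper's argument that $\mathbb{E}[\dot V(t)\mid V(t)]$ is preserved under monotone rearrangement while the conditional variance strictly drops, just phrased deterministically rather than probabilistically. The extra material you supply (weak-$*$ existence of a maximizer, swap-invariance of the penalty term, and the explicit feasibility check for $W$ inside $[\vmin,\vmax]$) fills in details the paper glosses over but does not change the approach.
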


\begin{proof}
The argument in Figure~\ref{fig::movingpeak} works exactly the same way in this setting; the only difference is that for the fourth ``flattening'' step, one can use Jensen's inequality to show that utility is {\em strictly larger} for the flattened curve than for the original.  The flattening does not change the first term \eqref{eqn::phi2}, but it makes the second term strictly larger. That is, we claim that if $W$ is a curve produced by flattening $V$ on $[s_1,s_2]$ then \begin{equation}\label{eqn::gvt} \int_0^T G_{V(t)} (\dot V(t))dt < \int_{s_1}^{s_2} G_{W(t)} (\dot W(t))dt.\end{equation}
To see this, note that the fundamental theorem of calculus and the construction of $W$ imply \begin{equation}\label{eqn::vtwt} \int_{t\in[s_1,s_2]: V(t) \in (a,b)} \dot V(t) dt  = \int_{t\in[s_1,s_2]: W(t) \in (a,b)} \dot W(t)dt.\end{equation}
If $t$ is sampled uniformly from $[s_1,s_2]$ then there is an $F$ such that $\mathbb E[\dot V(t) | V(t)] = F(V(t))$ and since \eqref{eqn::vtwt} holds for any $(a,b)$ this implies $E[\dot W(t) | W(t)] = F(W(t))$.  On the other hand, since $\dot V(t)$ assumes negative values with positive probability and $\dot W(t)$ is conditionally deterministic given $W(t)$, we have a strict inequality on conditional variance: $$\mathbb E\bigl[\Var[\dot V(t) | V(t)]\bigr] > \mathbb E \bigl[\Var[\dot W(t) | W(t)]\bigr].$$  Since there is (on the range inputs possible here) a negative upper bound on the second derivative of $G_{V(t)}$ we deduce that
 $$\mathbb E \Bigl[ \mathbb E \bigl[G_{V(t)} (\dot V(t)) | V(t)\bigr] \Bigr]  < \mathbb E \Bigl[ \mathbb E\bigl[G_{W(t)} (W(t)) | W(t)\bigr] \Bigr],$$
which implies \eqref{eqn::gvt}.

This argument shows that $V$ cannot cross any horizontal line more than twice. The rest of the proposition statement easily follows from this.
\end{proof}

\begin{figure}[ht!]
\begin{center}
\includegraphics[scale=0.47
]{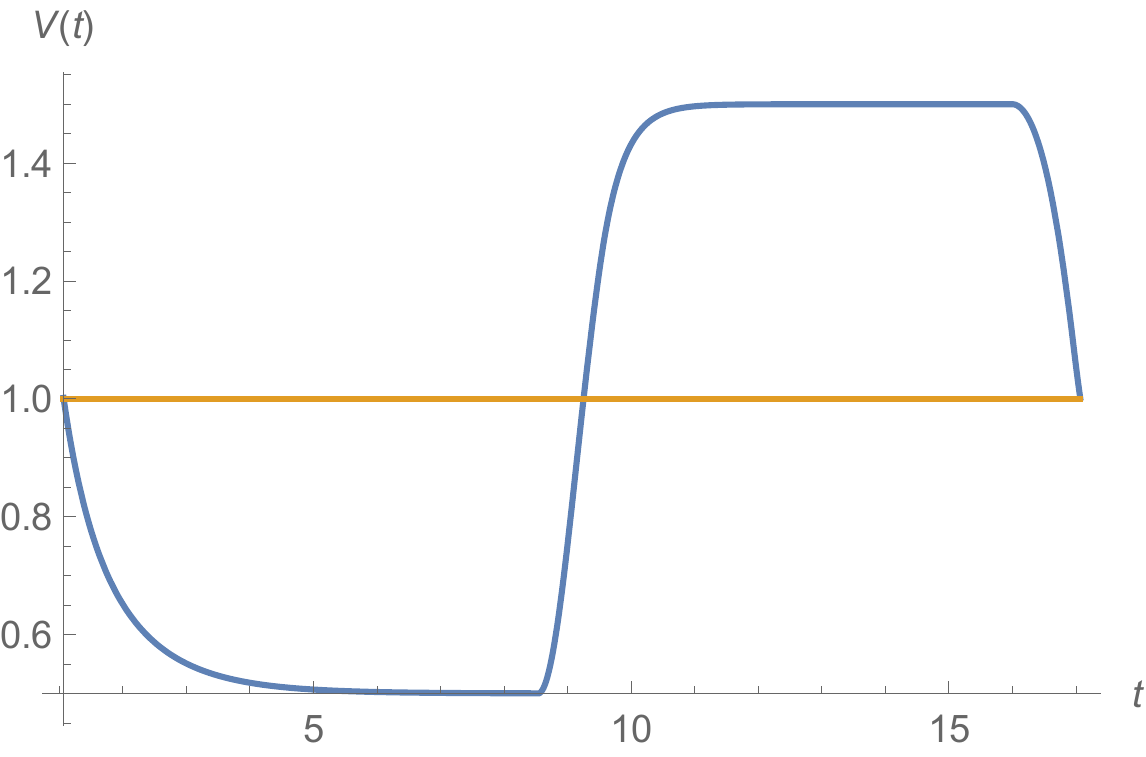}\hspace{.1in} \includegraphics[scale=0.47]{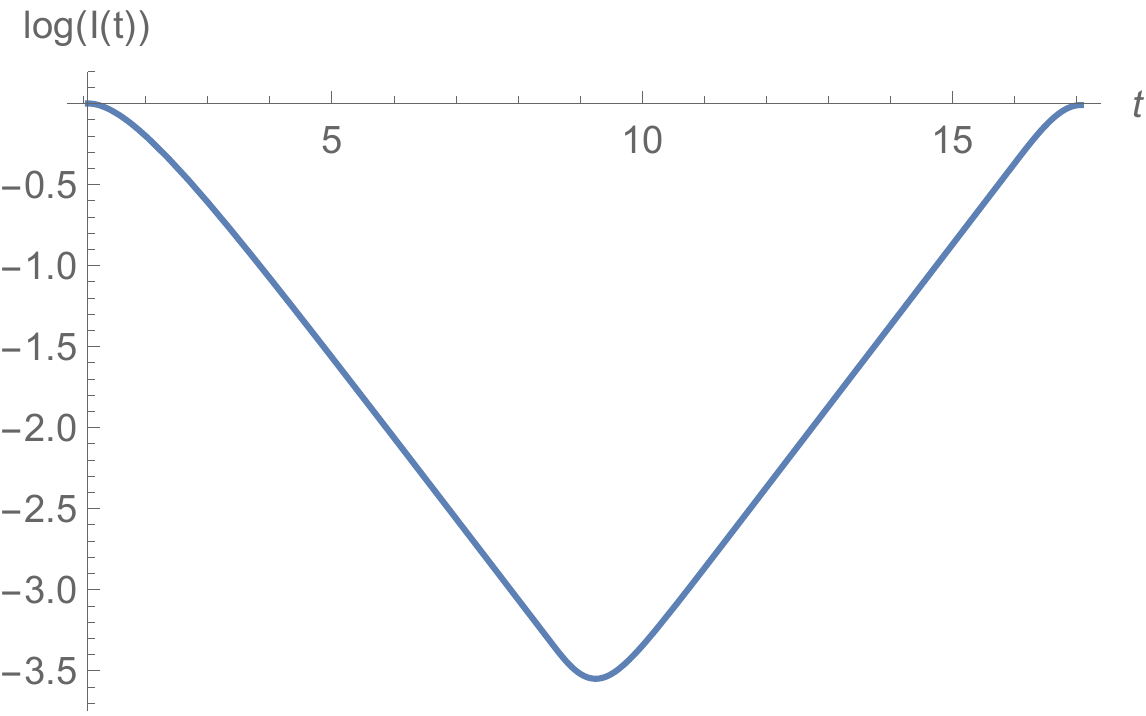}\hspace{.1in}\includegraphics[scale=0.47
]{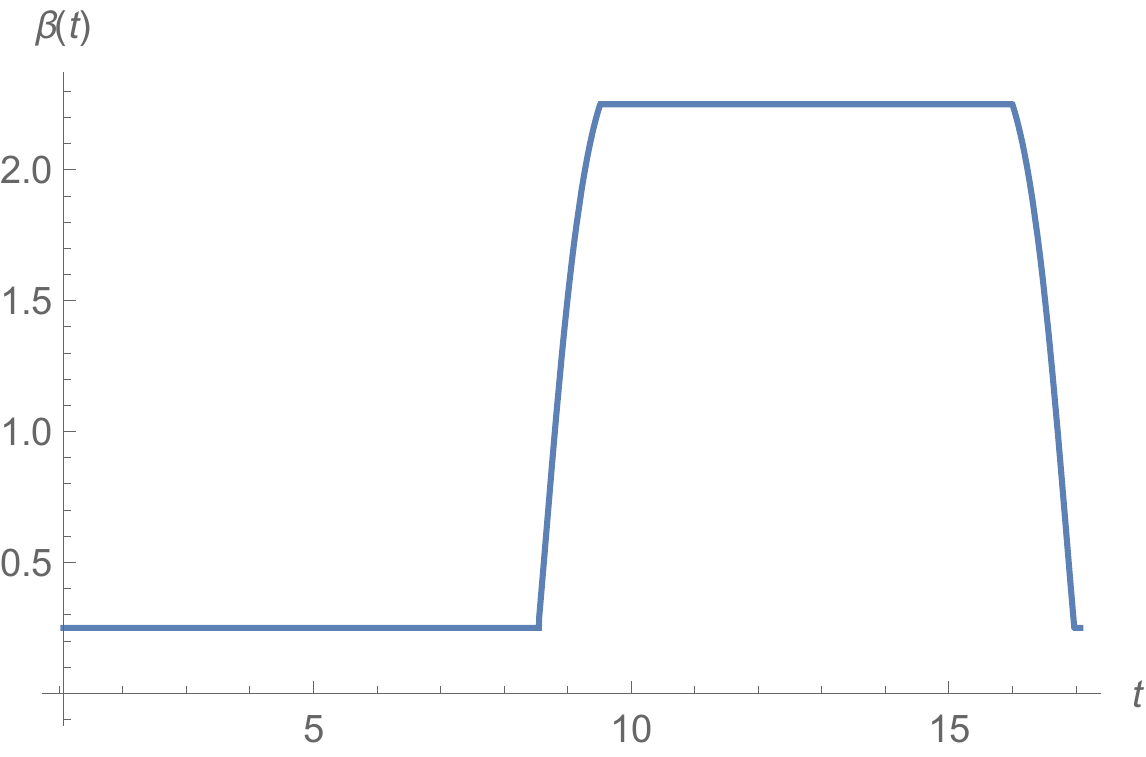}
\caption{\label{fig::usefulexamplechart} Analog of (one period of) Figure~\ref{fig::examplechart} with $u(x) = x-x^2/10$. Similar to Figure~\ref{fig::examplechart} except that the $\beta$ transitions (right) from $\bmin$ to $\bmax$ and back take place gradually over a couple of time units instead of instantly. Here the corners in the graph of $V(T)$ are less sharp than in Figure~\ref{fig::examplechart}, and the turnarounds in $\log I(t)$ are smoother. One might expect these effects to be more pronounced if $u$ were more concave. Up to affine transformation, the $u$ here is very close to the example in \eqref{eqn::ub} with $a_1=3/4$ and $a_2=1/4$.}
\end{center}
\end{figure}

\subsection{Euler-Lagrange solutions}

In principle, one can find an optimal $V$ explicitly using Euler-Lagrange theory. We briefly sketch the idea here. Consider an interval $(s, s+\Delta)$ on which $V$ is known to increase monotonically from $x_1$ to $x_2$ and assume $\int_s^{s+\Delta}V(t)dt = \Lambda$.  Once $\Lambda$ and $\Delta$ are fixed, one can fix any constants $a$ and $b$ and aim to maximize 
 \begin{equation}\label{eqn::phiw}\int_s^{s+\Delta}  w\bigl( V(t) \bigr) dt + \int_s^T  G_{V(t)}\bigl(\dot V(t)\bigr) dt\end{equation}
where $w(v):=u \circ \phi(v) + av + b$. The extra $av+b$ terms do not affect the optimal solution, since the amount that they add to \eqref{eqn::phiw} is determined by $\Lambda$ and $\Delta$.  However, one can also let $\Lambda$ and $\Delta$ be {\em variable} parameters and then try to tune $a$ and $b$ so that the optimizer to \eqref{eqn::phiw} obtains the desired values.

During the interval $(t,t+\Delta)$ we interpret $h(x) := \dot V(V^{-1}(x))$ as the ``speed'' at which $x$ is passed through, for $x \in (x_1, x_2)$, so that $1/h(x)$ is the density function for the occupation measure at $x$, and the goal becomes to maximize $\int_{x_1}^{x_2} \frac{G_x( h(x)) + w(x)}{h(x)}dx$.  We can then use calculus to find (for each $x$) the $h(x)$ that optimizes the integrand, recalling the constraints on $h(x)$ from \eqref{eqn::Yconstraint}. If the optimizer is unique for each $x$, this determines the function $h$. Once $h$ is known, solving the ODE $\dot V(t) = h(V(t))$ allows us to produce analogs of the red curves in Figure~\ref{fig::ychart} that dictate the way $V$ evolves during its upward trajectories.  We can treat decreasing intervals similarly, obtaining analogs of the blue curves in Figure~\ref{fig::ychart}.

In general, finding $a$ and $b$ is a tricky optimization problem; however, if we {\em assume} or {\em guess} that the optimal $V$ has an interval on which $V(t)$ is close to $\vmin$ (and ergo $\dot V(t) \approx 0$) and an interval on which $V(t)$ is close to $\vmax $, then we can deduce that $w$ must be close to zero at $\vmin$ and $\vmax$ (since otherwise one could increase \eqref{eqn::phiw} by either prolonging or condensing these intervals) which determines approximately what $a$ and $b$ must be. (It is not hard to see that---if the mean of $V$ and its endpoints in $(\vmin, \vmax)$ are held fixed---this assumption is correct if $T$ is large enough, but incorrect for smaller $T$.) Once $a$ and $b$ are known---and analogs of the blue and red curves in Figure~\ref{fig::ychart} are drawn---the problem of figuring out where the ``turnarounds'' occur is essentially the same here as in Proposition~\ref{prop::bestpossible}. This approach was used to produce Figure~\ref{fig::usefulexamplechart}.

Although we will not give details, we expect the arguments in the proof of Proposition~\ref{prop::wallconstrained} to work in a general $u$ version of Proposition~\ref{prop::bestwithu}, enabling one to show that the long-term optimal $\log I$ oscillates between $C_1$ and $C_2$ in a similar fashion (with the blue and red curves coming from {\em some} choice of $a$ and $b$). If the $a$ and $b$ are different from the ones guessed in producing Figure~\ref{fig::examplechart}, then the shape of the turnarounds might be different as well.

\end{appendix}

\bibliography{moderateseir}

\bibliographystyle{alpha}

\end{document}